\documentclass[11pt,reqno]{amsart}
\usepackage[utf8]{inputenc}
\usepackage{amsthm,amsmath,amsxtra,latexsym,amscd,amssymb,exscale,mathrsfs,mathtools,physics,xparse,xcolor}
\usepackage[top=3cm, bottom=3.3cm, left=3cm, right=3cm]{geometry}
\usepackage{scalerel,geometry,boxedminipage,enumitem}
\usepackage[abbrev]{amsrefs}
\usepackage[colorlinks=true,linkcolor=blue]{hyperref}
\usepackage[depth=2]{bookmark}

\newtheorem{theorem}{Theorem}[section]
\newtheorem{lemma}[theorem]{Lemma}
\newtheorem{proposition}[theorem]{Proposition}
\newtheorem{corollary}[theorem]{Corollary}

\theoremstyle{definition}
\newtheorem{definition}[theorem]{Definition}
\theoremstyle{definition}
\newtheorem{remark}[theorem]{Remark}
\theoremstyle{definition}

\theoremstyle{definition}
\newtheorem{example}[theorem]{Example}
\theoremstyle{definition}
\newtheorem*{ex*}{Example}
\numberwithin{equation}{section}

\let\inter\undefined
\newcommand{\inter}{\textnormal{int}}
\let\Per\undefined
\newcommand{\Per}{\mathfrak{S}}
\let\Real\undefined
\newcommand{\Real}{\mathbb{R}}
\let\Complex\undefined
\newcommand{\Complex}{\mathbb{C}}
\let\one\undefined
\newcommand{\one}{\boldsymbol{1}}
\let\E\undefined
\newcommand{\E}{\mathscr{E}}
\let\P\undefined
\newcommand{\P}{\mathcal{P}}
\let\SS\undefined
\newcommand{\SS}{\mathcal{S}}

\title[Majorization for hyperbolic polynomials]{Determinant majorization for hyperbolic polynomials on Euclidean Jordan algebras}
\author{Doanh Pham}
\address{Beijing International Center for Mathematical Research, Peking University, Beijing, China}
\email{doanhpham@pku.edu.cn}

\calclayout

\begin{document}
\begin{abstract}
Under cone containment and the central ray hypothesis, we prove a determinant majorization result for hyperbolic polynomials on Euclidean Jordan algebras. In the case of real symmetric $n \times n$ matrices, this recovers the main theorem of Harvey and Lawson [Duke Math. J. 174 (2025), no. 13, 2749–2763] and also yields a new $\sigma_2$ majorization result. Moreover, for every $3 \leq k \leq n-1$, we construct explicit hyperbolic polynomials satisfying cone containment and the central ray hypothesis but for which the analogous $\sigma_k$ majorization fails.
\end{abstract}
\maketitle

\section{Introduction}	

Determinant majorization inequalities connect the theory of hyperbolic polynomials with fully nonlinear elliptic partial differential equations (PDEs).  In matrix form, such an inequality compares a G\r{a}rding--Dirichlet operator $F$ with the Monge-Amp\`ere operator by an estimate of the form
\[F(A)^{\frac{1}{N}} \geq F(I_n)^{\frac{1}{N}}(\det A)^{\frac{1}{n}}, \quad A>0,\]
where $N$ is the degree of $F$.  These inequalities have important applications in recent works on fully nonlinear PDEs; see, for example, \cite{Abja-Olive_AMPA2022, Abja-Dinew-Olive_PA2023, Guo-Phong_AOM2024_L_infinity_nonlinear, Guo-Phong_CAG2024_Entropy_energy_bounds, Guo-Phong-Tong_AOM2023_L_infinity_Monge-Ampere, Harvey-Lawson_CVPDE2023_det_major}.

Recall that a homogeneous polynomial $p$ on a real vector space $V$ is \textit{hyperbolic} with respect to a direction $a \in V$ if $p(a)>0$ and, for every $x \in V$, the one-variable polynomial $t \mapsto p(ta + x)$ has only real roots. The corresponding \textit{G\r{a}rding cone} $\Gamma(p)_+$ is the connected component of $\{p > 0\}$ containing $a$. By G\r{a}rding’s theorem (\cite{Garding_JMM1959_inequality_hyperbolic}), $p$ is hyperbolic with respect to every point of $\Gamma(p)_+$. A \textit{G\r{a}rding--Dirichlet operator} (\cite{Harvey-Lawson_CPAM2013_Garding}) is a hyperbolic polynomial on $\SS(n)$, the space of real symmetric $n \times n$ matrices, such that its G\r{a}rding cone contains the cone of positive definite matrices.

In \cite{Harvey-Lawson_CVPDE2023_det_major}, Harvey and Lawson proved determinant majorization for invariant G\r{a}rding-Dirichlet operators, including
operators invariant under the real orthogonal, complex unitary, and quaternionic unitary groups, and also treated the Lagrangian Monge-Amp\`ere operator. More recently in \cite{Harvey-Lawson_Duke2025_det_major}, they proved that determinant majorization still holds when the invariance assumption is relaxed to the central ray hypothesis: the gradient $\nabla F(I_n)$ is a positive multiple of $I_n$. Operators satisfying this hypothesis are called \textit{$I_n$-central}.

\begin{theorem}[\cite{Harvey-Lawson_Duke2025_det_major}] \label{thm: Harvey-Lawson det major}
For an $I_n$-central G\r{a}rding-Dirichlet operator $F$ of degree $N$ on $\SS(n)$, we have
\[F(A)^{\frac{1}{N}} \geq F(I_n)^{\frac{1}{N}} (\det A)^{\frac{1}{n}} \quad \text{for every} \;\; A > 0.\]
\end{theorem}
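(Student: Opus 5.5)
The plan is to combine the concavity of $F^{1/N}$ on the Gårding cone with an averaging argument. The central ray hypothesis makes the tangent hyperplane to the level set $\{F = F(I_n)\}$ at $I_n$ equal to the trace hyperplane. Three standard facts will be used.

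\emph{(i) Concavity.} By Gårding's theory, $f := F^{1/N}$ is concave and positively homogeneous of degree one on $\Gamma(F)_+$, and $\Gamma(F)_+ \supset \{A>0\}$ by the Gårding--Dirichlet assumption. Hence $\{A>0 : f(A) \geq c\}$ is convex for every $c$.

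\emph{(ii) Central ray.} From $\nabla F(I_n) = \lambda I_n$ with $\lambda>0$ and Euler's identity $\langle \nabla F(I_n), I_n\rangle = N F(I_n)$, we get $\lambda = N F(I_n)/n$. Therefore
\[
\nabla f(I_n) = \frac{f(I_n)}{n} I_n .
\]

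\emph{(iii) Reduction.} By homogeneity of both sides, it suffices to show that $f(A) \geq f(I_n)$ whenever $A>0$ and $\det A = 1$.

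The key step is to show that $f(A) \geq f(I_n)$ on the hypersurface $\{\det A = 1\}$. Concavity alone gives only the wrong-direction bound
\[
f(A) \leq f(I_n) + \langle \nabla f(I_n), A - I_n\rangle = f(I_n)\,\frac{\operatorname{tr} A}{n},
\]
so a lower bound needs another idea. I would write $A = e^{X}$ with $X$ symmetric and $\operatorname{tr} X = 0$, and consider $g(t) = \log f(e^{tX})$ on $\mathbb{R}$. Then $g(0) = \log f(I_n)$, and by (ii), $g'(0) = \operatorname{tr}(X)/n = 0$. If $g$ were convex, then $g(1) \geq g(0)$ and we would be done.

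Convexity of $g$ is the main obstacle, because $F$ is not assumed to be invariant. I would try to get around it by averaging. Diagonalize $X = U D U^{T}$ and fix the orthonormal frame of $U$. Restrict $F$ to the diagonal matrices in that frame: $p(x) = F(U \operatorname{diag}(x) U^{T})$ is a hyperbolic polynomial on $\mathbb{R}^n$ whose Gårding cone contains the positive orthant. However, the central condition does not transfer: the restriction only inherits $\partial_i p(\mathbf{1}) = \lambda$ for each $i$, namely that the diagonal entries of $\nabla F(I_n)$ in this frame are all equal. Fortunately that is exactly what the one-frame argument needs.

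The problem therefore reduces to an $\mathbb{R}^n$ statement. Let $p$ be hyperbolic with $\mathbb{R}^n_{+} \subset \Gamma(p)_+$ and $\nabla p(\mathbf{1}) \parallel \mathbf{1}$. We want $p(x)^{1/N} \geq p(\mathbf{1})^{1/N} (x_1\cdots x_n)^{1/n}$ for $x>0$.

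For this I would use the fact that $p$ has nonnegative coefficients in the monomial expansion, since the positive orthant lies in the Gårding cone. This follows by repeated directional derivatives in the directions $e_i$, each of which lies in the closure of the cone. Write
\[
p(x) = \sum_\alpha c_\alpha x^\alpha, \qquad c_\alpha \geq 0,\quad |\alpha| = N .
\]
Apply the weighted AM--GM inequality with weights $c_\alpha/p(\mathbf{1})$:
\[
\frac{p(x)}{p(\mathbf{1})} \geq \prod_\alpha x^{\alpha c_\alpha / p(\mathbf{1})} = \prod_i x_i^{\,\partial_i p(\mathbf{1})/p(\mathbf{1})}.
\]
The central condition gives $\partial_i p(\mathbf{1})/p(\mathbf{1}) = N/n$ for every $i$. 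Hence $p(x) \geq p(\mathbf{1})\,(\prod_i x_i)^{N/n}$, which is the desired inequality after taking $N$th roots.

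This avoids convexity of $g$ entirely. Taking $x$ to be the eigenvalues of $A$ in its own eigenframe then proves the theorem. The only delicate point is the coefficient nonnegativity, which I expect is the step needing the most care.
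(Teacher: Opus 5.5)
Your final argument is correct and is essentially the paper's proof of Theorem~\ref{thm: majorization in Jordan algebra} specialized to $V=\SS(n)$: restrict $F$ to diagonal matrices in the eigenframe of $A$, check that the restriction is stable with $\nabla p(\one)=\lambda\one$, and conclude by nonnegativity of coefficients plus weighted AM--GM, which is exactly the Harvey--Lawson/Gurvits lemma the paper cites. You differ only in sketching those two cited facts yourself; note that your aside that the central condition ``does not transfer'' is misleading, since $\nabla p(\one)=\lambda\one$ is precisely $\one$-centrality of the restriction (the paper's Claim~3), and that your concavity and convexity preliminaries are not needed.
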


The cone of positive definite real symmetric matrices is the basic example of a symmetric cone. Euclidean Jordan algebras provide an intrinsic framework for symmetric cones, with the Jordan determinant and Jordan trace playing the roles of the usual determinant and trace. This makes it natural to ask whether determinant majorization is a genuinely symmetric cone phenomenon rather than a special feature of matrix algebras. Our main result is the following.

\begin{theorem} \label{thm: majorization in Jordan algebra}
Let $V$ be a finite-dimensional Euclidean Jordan algebra of rank $r$ with unit element $e$ and symmetric cone $\P_+$. Suppose that $p$ is a hyperbolic polynomial of degree $m$ on $V$ with G\r{a}rding cone $\Gamma_+$. If $\Gamma_+ \supset \P_+$ and $p$ is $e$-central, that is, $\tr^{p,e} = \frac{m}{r} \tr\nolimits_{\mathcal{J}}$, then
\[p(a)^{\frac{1}{m}} \geq p(e)^{\frac{1}{m}} \left(\det\nolimits_{\mathcal{J}}a\right)^{\frac{1}{r}} \quad \text{for every} \;\; a \in \P_+.\]
\end{theorem}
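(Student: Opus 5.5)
Set $\hat p := p^{1/m}$. By Gårding's theorem $\hat p$ is positively homogeneous of degree one and concave on $\Gamma_+$. The plan is to compare $\hat p$ with its tangent affine function at $e$ and then optimize over the automorphisms of the symmetric cone that fix $e$.

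\textbf{Step 1 (supergradient bound at $e$).} By concavity and homogeneity, for every $x\in\Gamma_+$,
\[
\hat p(x)\ \ge\ \langle \nabla\hat p(e), x\rangle .
\]
The $e$-centrality hypothesis says $\tr^{p,e}=\frac{m}{r}\tr_{\mathcal J}$. Since $\tr^{p,e}(x)=Dp(e)[x]/p(e)$, we get
\[
D\hat p(e)[x]=\frac{\hat p(e)}{m}\,\tr^{p,e}(x)=\frac{\hat p(e)}{r}\tr_{\mathcal J}(x).
\]
Hence
\[
\hat p(x)\ \ge\ \frac{p(e)^{1/m}}{r}\,\tr_{\mathcal J}(x) \qquad\text{for all } x\in\Gamma_+\supset\P_+ .
\]

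\textbf{Step 2 (move $a$ by a cone automorphism).} We cannot apply AM--GM to $\tr_{\mathcal J}(a)$ directly, because $\hat p$ is not invariant. Instead, consider the quadratic representation $P(b)$ for $b\in\P_+$. It is an automorphism of $\P_+$ with $\det_{\mathcal J}(P(b)x)=\det_{\mathcal J}(b)^2\det_{\mathcal J}(x)$. The key idea is to apply Step 1 not at $e$ but at the point $c:=P(b)e=b^2$, for varying $b$. Concavity at $c$ gives
\[
\hat p(a)\ \ge\ \langle\nabla \hat p(c),a\rangle .
\]
We then need to choose $c$ well and control $\nabla\hat p(c)$, which is awkward because $p$ is not $P(b)$-invariant.

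An alternative is a minimization argument. Consider the problem of minimizing $\hat p$ on the set $\{x\in\P_+ : \det_{\mathcal J}x=1\}$; by homogeneity the theorem is equivalent to this minimum being $\hat p(e)$.

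\textbf{Step 3 (Lagrange/convexity argument on the determinant surface).} The level set $\Sigma=\{\det_{\mathcal J}=1\}\cap\P_+$ is the boundary of the convex set $\{\det_{\mathcal J}^{1/r}\ge1\}$. The function $\hat p$ is concave and $1$-homogeneous. A cleaner route uses the function
\[
g(x):=\log\hat p(x)-\tfrac1r\log\det\nolimits_{\mathcal J}(x).
\]
This $g$ is $0$-homogeneous, and $\log\hat p$ is concave. On the other hand $-\log\det_{\mathcal J}$ is convex, so their difference has no evident sign of convexity, and we must avoid such a naive argument. The promising approach instead parametrizes along geodesics of the symmetric cone: the curves $t\mapsto P(\exp(tu))e=\exp(2tu)$ together with translates. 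Along $t\mapsto\exp(tu)$ with $\tr_{\mathcal J}u=0$, the determinant is constant. For $q(t):=\log p(\exp(tu))$, hyperbolicity (via the spectral decomposition $u=\sum\lambda_ic_i$ and the Peirce structure) should give convexity of $q$. At $t=0$ we have $q'(0)=\tr^{p,e}(u)=\frac mr\tr_{\mathcal J}u=0$. Convexity then yields $q(t)\ge q(0)$, that is, $p(\exp(u))\ge p(e)$ whenever $\tr_{\mathcal J}u=0$. Every $a\in\P_+$ with $\det_{\mathcal J}a=1$ is of the form $\exp(u)$ with $\tr u=0$, so this gives the theorem.

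\textbf{Main obstacle.} The crux is the convexity of $t\mapsto\log p(\exp(tu))$. Since $u$ has a spectral decomposition $u=\sum\lambda_ic_i$ in a Jordan frame, we have
\[
\exp(tu)=\sum_i e^{t\lambda_i}c_i .
\]
The restriction of $p$ to the span of the frame is a polynomial $f(s_1,\dots,s_r)$ that is hyperbolic with respect to $e=\sum c_i$. Its Gårding cone contains the positive orthant, because $\P_+\cap\mathrm{span}\{c_i\}$ is the orthant. Therefore $f$ has nonnegative coefficients. This follows from the standard fact that a hyperbolic polynomial whose cone contains the orthant has a nonnegative expansion, a consequence of real-stability/Brändén's theorem. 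With nonnegative coefficients, $t\mapsto\log f(e^{t\lambda_1},\dots,e^{t\lambda_r})$ is a log-sum-exp of linear functions, hence convex. Justifying the nonnegativity of the coefficients (via the multiaffine/derivative-in-cone argument: iterated directional derivatives of $p$ in cone directions $c_i$ are nonnegative on the cone) is the step I expect to require the most care. The remainder of the argument is routine.
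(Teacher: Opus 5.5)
Your final argument (the log-convexity route in Step 3 and your last paragraph) is correct and is essentially the paper's proof. Like the paper, you restrict $p$ to the span of the Jordan frame of $a$, observe that the restriction is stable and hence has nonnegative coefficients, and invoke $e$-centrality; the only difference is that you prove the cited Harvey--Lawson/Gurvits lemma inline, via convexity of $t\mapsto\log f(e^{t\lambda_1},\dots,e^{t\lambda_r})$ together with $q'(0)=\tr^{p,e}(u)=\frac{m}{r}\tr\nolimits_{\mathcal{J}}(u)=0$.

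Delete Steps 1--2: they are unused and in fact wrong, since concavity plus $1$-homogeneity give $\hat p(x)\le\langle\nabla\hat p(e),x\rangle$, not $\ge$ (if your Step 1 held, AM--GM would finish the proof at once).
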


When $V = \SS(n)$ with the standard Jordan product $A\circ B=(AB+BA)/2$, the Jordan determinant is the usual determinant (see Example \ref{ex: Jordan algebra of symmetric matrices}). Thus, in this case, Theorem \ref{thm: majorization in Jordan algebra} recovers Theorem \ref{thm: Harvey-Lawson det major}.

The elementary symmetric function $\sigma_2$ on $\Real^n$, defined by
\[\sigma_2(x) = \sum_{i < j} x_i x_j,  \quad x = (x_1, \dots, x_n) \in \Real^n,\]
is a hyperbolic polynomial whose G\r{a}rding cone is denoted by $\Gamma(\sigma_2)_+$. The rank-two Lorentz Jordan algebra gives a symmetric cone realization of $\Gamma(\sigma_2)_+$. By applying Theorem \ref{thm: majorization in Jordan algebra} to this model, we prove a $\sigma_2$ majorization result in $\Real^n$. In the following, we denote $\one = (1, \dots, 1) \in \Real^n$.

\begin{corollary} \label{cor: sigma_2 majorization}
Suppose that $p$ is a $\one$-central hyperbolic polynomial of degree $m$ on $\Real^n$ with G\r{a}rding cone $\Gamma(p)_+$. If $\Gamma(p)_+ \supset \Gamma(\sigma_2)_+$, then
\[p(a)^{\frac{1}{m}} \geq p(\one)^{\frac{1}{m}} \left(\frac{\sigma_2(a)}{{n \choose 2}}\right)^{\frac{1}{2}} \quad \text{for every} \;\; a \in \Gamma(\sigma_2)_+.\]
\end{corollary}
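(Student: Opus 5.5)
We deduce Corollary \ref{cor: sigma_2 majorization} from Theorem \ref{thm: majorization in Jordan algebra} by realizing $\Gamma(\sigma_2)_+$ as the symmetric cone of a rank-two Jordan algebra, namely the Lorentz (spin factor) algebra. The key identity is
\[\sigma_2(x) = \tfrac12\big((\textstyle\sum_i x_i)^2 - |x|^2\big).\]
We change coordinates so that $\one$ becomes the unit. Let $u = \one/\sqrt n$ and write $x = s u + y$ with $y \perp \one$, so that $s = \langle x,u\rangle$. Then $\sum_i x_i = \sqrt n\, s$ and $|x|^2 = s^2 + |y|^2$. Hence
\[\sigma_2(x) = \tfrac12\big((n-1)s^2 - |y|^2\big).\]

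Now rescale by setting $t = s\sqrt{(n-1)/n}$, so that $\sum_i x_i = \tfrac{n}{\sqrt{n-1}}\,t$. We equip $\Real^n$, identified with $\Real \oplus \one^\perp$ via $x \mapsto (t,y)$, with the spin factor product
\[(t,y)\circ(t',y') = (tt' + \langle y,y'\rangle,\; ty' + t'y).\]
This is a Euclidean Jordan algebra of rank $2$ with unit $(1,0)$. The unit corresponds to $s = \sqrt{n/(n-1)}$, that is, to the point $x = \tfrac{\sqrt n}{\sqrt{n-1}}\,u = \one/\sqrt{n-1}$. The algebra has
\[\det\nolimits_{\mathcal J}(t,y) = t^2 - |y|^2, \qquad \tr\nolimits_{\mathcal J}(t,y) = 2t.\]
From the formula for $\sigma_2$ in the $(s,y)$ coordinates we get $\sigma_2(x) = \tfrac{n-1}{2}\, t^2 - \tfrac12|y|^2$. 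This is not yet proportional to $\det_{\mathcal J}$, so we also rescale $y$ by setting $y = \sqrt{n-1}\, z$ and using $(t,z)$ as coordinates. Then
\[\sigma_2 = \tfrac{n-1}{2}\,\det\nolimits_{\mathcal J}(t,z).\]
Consequently the symmetric cone $\{t > |z|\}$ coincides with $\Gamma(\sigma_2)_+$: it is the component of $\{\sigma_2 > 0\}$ containing $\one$, which has $t>0$. The unit is $e = \one/\sqrt{n-1}$, and the Jordan trace is a positive multiple of $\sum_i x_i$.

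Next we verify the hypotheses of Theorem \ref{thm: majorization in Jordan algebra}. Hyperbolicity and G\r{a}rding cones are invariant under linear changes of coordinates. By the hypothesis $\Gamma(p)_+ \supset \Gamma(\sigma_2)_+ = \P_+$, the polynomial $p$ is hyperbolic in the direction $e \in \P_+$ with the same G\r{a}rding cone.

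For $e$-centrality, $\tr^{p,e}$ is the derivative $Dp(e)/p(e)$, a linear functional. Since $e$ is a positive multiple of $\one$, homogeneity shows that $\tr^{p,e}$ is a positive multiple of $\tr^{p,\one}$. The $\one$-central hypothesis says that $\nabla p(\one)$ is a positive multiple of $\one$, i.e.\ $\tr^{p,\one}$ is proportional to $\sum_i x_i$, and hence to $\tr_{\mathcal J}$. The constant is forced by evaluating at $e$: both $\tr^{p,e}(e) = m$ and $\tfrac{m}{r}\tr_{\mathcal J}(e) = m$. So the normalization $\tr^{p,e} = \tfrac m2 \tr_{\mathcal J}$ holds automatically. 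The main subtlety here is checking that the paper's definition of $\tr^{p,e}$ and of $\one$-centrality match this reading; I expect them to.

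Applying the theorem gives
\[p(a)^{1/m} \geq p(e)^{1/m} \big(\det\nolimits_{\mathcal J} a\big)^{1/2}.\]
By homogeneity, $p(e)^{1/m} = (n-1)^{-1/2}\, p(\one)^{1/m}$, and $\det_{\mathcal J} a = \tfrac{2}{n-1}\,\sigma_2(a)$. The right side therefore becomes
\[p(\one)^{1/m}\left(\frac{2\,\sigma_2(a)}{(n-1)^2}\right)^{1/2}.\]
Comparing with the target $\big(\sigma_2(a)/\binom n2\big)^{1/2}$, the prefactor here is $2/(n-1)^2$, whereas the target needs $2/(n(n-1))$. I would recheck the scaling at this point. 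Equality must hold at $a = \one$, where $\sigma_2(\one) = \binom n2$. This forces the correct normalization to be $\det_{\mathcal J}(\one) = 1$ after rescaling, i.e.\ the unit should be $\one$ itself. So I would redo the coordinates with $e = \one$, using $t = s/\sqrt n$ and $z$ scaled so that $\sigma_2 = \binom n2 \det_{\mathcal J}$. With that choice the same argument yields exactly the stated inequality.
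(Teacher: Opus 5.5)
Your final route is exactly the paper's proof, and it is correct: take unit $e=\one$, $t=\sigma_1(x)/n$ and $z=y/\sqrt{n(n-1)}$. Then $\sigma_2(x)/\binom{n}{2}=t^2-|z|^2=\det\nolimits_{\mathcal J}(t,z)$ and $\tr\nolimits_{\mathcal J}=2t$, so $\tr^{p,\one}=\frac{m}{n}\sigma_1=mt=\frac{m}{2}\tr\nolimits_{\mathcal J}$, and Theorem~\ref{thm: majorization in Jordan algebra} applies.

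Your diagnosis of the earlier mismatch is wrong, though: the choice of unit was not the problem, an algebra slip was. With $t=s\sqrt{(n-1)/n}$ one has $(n-1)s^2=nt^2$. Hence $\sigma_2=\frac{n}{2}t^2-\frac12|y|^2$, not $\frac{n-1}{2}t^2-\frac12|y|^2$, and your claimed identity $\sigma_2=\frac{n-1}{2}\det\nolimits_{\mathcal J}(t,z)$ was false. The right rescaling is $y=\sqrt{n}\,z$, which gives $\det\nolimits_{\mathcal J}=\frac{2}{n}\sigma_2$. Then $p(e)^{1/m}(\det\nolimits_{\mathcal J}a)^{1/2}=(n-1)^{-1/2}p(\one)^{1/m}\bigl(2\sigma_2(a)/n\bigr)^{1/2}$, which is precisely the target.

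More generally, for any unit $e=c\one$ with $c>0$, the factor $c$ in $p(e)^{1/m}$ cancels the factor $c^{-1}$ in $(\det\nolimits_{\mathcal J}a)^{1/2}=\bigl(\sigma_2(a)/\sigma_2(c\one)\bigr)^{1/2}$. So your equality check at $a=\one$ correctly detected an arithmetic error, but it does not force $e=\one$.
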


For $A \in \SS(n)$, we define
\[\sigma_2(A) \coloneqq \sigma_2(\lambda_1(A), \dots, \lambda_n(A)),\]
where $\lambda_1(A), \dots, \lambda_n(A)$ are the usual eigenvalues of $A$. Then $\sigma_2$ is a hyperbolic polynomial on $\SS(n)$ and we denote its G\r{a}rding cone by $\Gamma(\sigma_2)_+$. Analogously to Corollary \ref{cor: sigma_2 majorization}, we also prove a $\sigma_2$ majorization result in $\SS(n)$.

\begin{corollary} \label{cor: sigma_2 majorization in matrices}
Suppose that $P$ is an $I_n$-central hyperbolic polynomial of degree $m$ on $\SS(n)$ with G\r{a}rding cone $\Gamma(P)_+$. If $\Gamma(P)_+ \supset \Gamma(\sigma_2)_+$, then
\[P(A)^{\frac{1}{m}} \geq P(I_n)^{\frac{1}{m}} \left(\frac{\sigma_2(A)}{{n \choose 2}}\right)^{\frac{1}{2}} \quad \text{for every} \;\; A \in \Gamma(\sigma_2)_+.\]
\end{corollary}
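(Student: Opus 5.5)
We deduce Corollary \ref{cor: sigma_2 majorization in matrices} from Corollary \ref{cor: sigma_2 majorization}. The reduction goes through the eigenvalue map together with an averaging trick that makes the polynomial orthogonally invariant.

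\textbf{Step 1: average $P$ over the orthogonal group.} Fix $A \in \Gamma(\sigma_2)_+$ and diagonalize it as $A = U \,\mathrm{diag}(\lambda)\, U^T$, with $\lambda = \lambda(A) \in \Gamma(\sigma_2)_+ \subset \Real^n$. Since $P(A) = (P\circ \mathrm{Ad}_U)(\mathrm{diag}\,\lambda)$, and since $P \circ \mathrm{Ad}_U$ is again hyperbolic, again $I_n$-central (because $\nabla(P\circ \mathrm{Ad}_U)(I_n) = U^T \nabla P(I_n) U$), and has Gårding cone $U^T\Gamma(P)_+U \supset \Gamma(\sigma_2)_+$ (the matrix cone $\Gamma(\sigma_2)_+$ is $O(n)$-invariant), we may replace $P$ by $P\circ \mathrm{Ad}_U$ and assume $A = \mathrm{diag}(\lambda)$ is diagonal. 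Note also that $P(I_n)$ is unchanged by this replacement.

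\textbf{Step 2: restrict to the diagonal.} Define $p(x) \coloneqq P(\mathrm{diag}(x))$ on $\Real^n$. We check the hypotheses of Corollary \ref{cor: sigma_2 majorization}.
\begin{enumerate}[label=(\roman*)]
\item $p$ is hyperbolic with respect to $\one$, since $t\mapsto p(t\one + x) = P(tI_n + \mathrm{diag}\, x)$ has only real roots and $p(\one) = P(I_n) > 0$.
\item Its Gårding cone contains $\Gamma(\sigma_2)_+$. The set $\{\mathrm{diag}(x) : x \in \Gamma(\sigma_2)_+\}$ is connected, contains $I_n$, and lies in the matrix cone $\Gamma(\sigma_2)_+ \subset \Gamma(P)_+$, where $P>0$. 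Hence it lies in the component of $\{p>0\}$ containing $\one$.
\item $p$ is $\one$-central. We have $\partial_i p(\one) = \langle \nabla P(I_n), E_{ii}\rangle = c$ for all $i$, because $\nabla P(I_n) = cI_n$. So $\nabla p(\one) = c\one$.
\end{enumerate}
In the paper's normalization, $\one$-centrality presumably means $\tr^{p,\one} = \frac{m}{n}\sum_i x_i$. Since $\tr^{p,\one}(x) = \frac{d}{dt}\log p(\one+tx)|_{t=0}$ is linear with gradient $\nabla p(\one)/p(\one)$, and Euler's formula forces the constant to be $m/n$, this agrees with the gradient condition above.

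\textbf{Step 3: conclude.} Corollary \ref{cor: sigma_2 majorization} gives
\[
P(A)^{1/m} = p(\lambda)^{1/m} \ge p(\one)^{1/m}\left(\frac{\sigma_2(\lambda)}{\binom n2}\right)^{1/2} = P(I_n)^{1/m}\left(\frac{\sigma_2(A)}{\binom n2}\right)^{1/2},
\]
which is the desired inequality.

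\textbf{Main obstacle.} The only delicate points are bookkeeping. First, the centrality notion must transfer under both conjugation and restriction; this is a direct gradient computation, done in Step 2(iii). Second, the Gårding cone of the restriction must contain $\Gamma(\sigma_2)_+$. This needs the connectedness of $\Gamma(\sigma_2)_+$ in $\Real^n$ (it is convex), combined with the $O(n)$-invariance of the matrix cone $\Gamma(\sigma_2)_+$ used in Step 1. No averaging is actually required, because conjugating by the single matrix $U$ suffices.
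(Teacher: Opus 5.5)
Your proof is correct, but it takes a different route from the paper. The paper does not pass through the vector case. It writes $X=tI_n+Y$ with $\tr(Y)=0$ and uses $\sigma_2(X)/\binom{n}{2}=t^2-|Y|^2/(n(n-1))$ to identify $\Gamma(\sigma_2)_+\subset\SS(n)$ with the symmetric cone of a rank-two Lorentz Jordan algebra structure on $\SS(n)=\Real I_n\oplus\{\tr=0\}$ with unit $I_n$. It then checks that $I_n$-centrality of $P$ is $e$-centrality for this structure and applies Theorem~\ref{thm: majorization in Jordan algebra} directly, in parallel with the proof of Corollary~\ref{cor: sigma_2 majorization}.

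You instead keep the standard rank-$n$ structure. You restrict $P$ to $x\mapsto U\,\mathrm{diag}(x)\,U^T=\sum_i x_i\,u_i\otimes u_i$, i.e.\ to the span of a Jordan frame adapted to $A$, and feed the resulting $\one$-central polynomial into Corollary~\ref{cor: sigma_2 majorization}. This is the same restriction mechanism the paper uses inside the proof of Theorem~\ref{thm: majorization in Jordan algebra}, with target $\sigma_2$ in place of $\det$.

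The two approaches buy different things:
\begin{itemize}
\item The paper's route is uniform: both corollaries are direct instances of the main theorem, $\Gamma(\sigma_2)_+$ in $\SS(n)$ is exhibited as a symmetric cone in its own right, and $A$ never has to be diagonalized.
\item Your route gives a general transfer principle. It only uses that the target cone is $\mathrm{O}(n)$-invariant and described through eigenvalues, so any majorization statement of this type on $\Real^n$ (for any $\sigma_k$) formally implies its $\SS(n)$ counterpart. The construction $F(X)=p(d(X))$ of Example~\ref{ex: hyperbolic poly violate sigma_k in matrices} transfers in the opposite direction. Together they show that the $\Real^n$ and $\SS(n)$ versions of $\sigma_k$ majorization under centrality and cone containment are equivalent.
\end{itemize}

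Two cosmetic points. First, nothing is averaged: you conjugate by the single matrix $U$, as you note yourself, so the heading of Step~1 is misleading. That step can be absorbed by defining $p(x)=P(U\,\mathrm{diag}(x)\,U^T)$ directly. Second, in (iii) the gradient detour is unnecessary. From $\lambda^{p,\one}(x)=\lambda^{P,I_n}(U\,\mathrm{diag}(x)\,U^T)$ you get $\tr^{p,\one}(x)=\frac{m}{n}\sigma_1(x)$, which is exactly the paper's definition of $\one$-centrality.
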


Although the determinant and $\sigma_2$ majorization results hold for hyperbolic polynomials on $\Real^n$ and $\SS(n)$ satisfying the central ray hypothesis and cone containment, in Section \ref{sec: On sigma_k}, we show that this phenomenon for $\sigma_2$ and $\sigma_n$ does not extend to the intermediate elementary symmetric functions. Specifically, for all $3 \leq k \leq n-1$, we construct explicit $\one$-central hyperbolic polynomials with the cone containment property but for which the analogous $\sigma_k$ majorization statement fails.

In Section \ref{sec: Dual}, we record equivalent gradient formulations of Theorem \ref{thm: majorization in Jordan algebra} and Corollaries \ref{cor: sigma_2 majorization} and \ref{cor: sigma_2 majorization in matrices} in terms of dual functions.

\section{Preliminaries on Jordan algebras}

In this section, we recall some basic facts about Jordan algebras. Further background can be found in \cite{Faraut_book_Analysis_symmetric_cones}.

\subsection{Basic definitions and the spectral theorem}

A real vector space $V$ is an \textit{algebra} if there is a bilinear product
\[V \times V \longmapsto V, \quad (x, y) \longmapsto x \circ y.\]
An algebra $V$ is called a \textit{Jordan algebra} if it satisfies
\[x \circ y = y \circ x \quad \text{and} \quad x \circ (x^2 \circ y) = x^2 \circ (x \circ y)\]
for all $x, y \in V$, where $x^2 \coloneqq x \circ x$. When $V$ is a Jordan algebra, we always assume that it has an element $e$, called the \textit{unit element}, which satisfies
\[e \circ x = x \quad \text{for all} \;\; x \in V.\]
We denote by $\Real[X]$ the algebra of one-variable real polynomials. The \textit{minimal polynomial} of $x \in V$ is the monic generator of the ideal $\{p \in \Real[X] : p(x) = 0\}$. Let $\deg(x)$ denote the degree of the minimal polynomial of $x$. The \textit{rank} of $V$ is defined by
\[\rank(V) \coloneqq \max_{x \in V} \deg(x).\]

A real finite-dimensional Jordan algebra $V$ is called \textit{Euclidean} if there exists an inner product $\langle \cdot, \cdot \rangle$ on $V$ which satisfies the associative property:
\[\langle x \circ y, z \rangle = \langle x, y \circ z \rangle \quad \text{for all} \;\; x, y, z \in V.\]
A vector $c \in V$ is called an \textit{idempotent} if
\[c^2 = c.\]
Two idempotents $c_1, c_2 \in V$ are called \textit{orthogonal} if they satisfy
\[c_1 \circ c_2 = 0.\]
A nonzero idempotent is called \textit{primitive} if it cannot be written as a sum of two nonzero orthogonal idempotents. A \textit{Jordan frame} is a set $\{c_1, \dots, c_k\}$ of pairwise-orthogonal primitive idempotents which satisfies
\[c_1 + \dots + c_k = e.\]

An important fact for Euclidean Jordan algebras is the spectral theorem stated as follows.

\begin{theorem}[{\cite[Theorem III.1.2]{Faraut_book_Analysis_symmetric_cones}}] \label{thm: spectral theorem for Jordan algebras}
Let $V$ be a finite-dimensional Euclidean Jordan algebra, and denote $r = \rank(V)$. Then for every $x \in V$, there exist a Jordan frame $\{c_1, \dots, c_r\}$ and real numbers $\lambda_1(x), \dots, \lambda_r(x)$ such that
\[x = \sum_{i=1}^r \lambda_i(x) c_i.\]
The numbers $\lambda_i(x)$'s, counted with multiplicities, are uniquely determined by $x$.
\end{theorem}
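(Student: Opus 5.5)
The plan is to follow the classical route of Faraut--Kor\'anyi. First I get a decomposition with distinct eigenvalues by working in the associative subalgebra generated by $x$. Then I refine it into a Jordan frame. Finally I show that every Jordan frame has exactly $r$ elements, so that a frame of size $r$ appears and the eigenvalues (with multiplicities) are intrinsic.

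\emph{Step 1 (first spectral theorem).} Fix $x\in V$ and let $\Real[x]\subset V$ be the subalgebra generated by $e$ and $x$. It is commutative and associative, since Jordan algebras are power-associative. Its dimension is $\deg(x)=k$, and the restricted inner product is still associative. It has no nonzero nilpotents: if $y^2=0$ then $\langle y,y\rangle=\langle y^2,e\rangle=0$, and higher powers reduce to this case. So $\Real[x]$ is a reduced finite-dimensional commutative real algebra, hence a product of copies of $\Real$ and $\Complex$. A $\Complex$ factor is excluded: there $u^2=-1$ gives $u^2+1^2=0$ with $1\neq0$, contradicting formal reality, since $\sum \langle y_i^2,e\rangle=\sum|y_i|^2$. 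Thus $\Real[x]\cong\Real^k$, and the standard basis gives orthogonal idempotents $d_1,\dots,d_k$ with $\sum d_j=e$ and $x=\sum\mu_j d_j$ with distinct $\mu_j$. The $\mu_j$ are exactly the roots of the minimal polynomial, so this decomposition is unique.

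\emph{Step 2 (refinement).} Each $d_j$ that is not primitive is written as a sum of two nonzero orthogonal idempotents, and I iterate. The process terminates because orthogonal nonzero idempotents are linearly independent; indeed $\langle c,c'\rangle=\langle c\circ c',e\rangle=0$ and $\langle c,c\rangle>0$. So any orthogonal family has at most $\dim V$ members. The result is a Jordan frame $\{c_1,\dots,c_\ell\}$ with $x=\sum\lambda_i c_i$, where each $\lambda_i$ is the $\mu_j$ of the block containing $c_i$.

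\emph{Step 3 (all frames have $r$ elements): the main obstacle.} For any frame $\{c_1,\dots,c_\ell\}$, the element $y=\sum t_ic_i$ with distinct $t_i$ has minimal polynomial $\prod(X-t_i)$, so $\ell\le r$. For the reverse inequality I would prove that every frame has the same size, using the Peirce decomposition $V=\bigoplus_{i\le j}V_{ij}$ relative to the frame. The key facts are these:
\begin{itemize}
\item $V(c,1)=\Real c$ for primitive $c$, which follows by applying Step 1 inside $V(c,1)$: any non-scalar element there would split $c$.
\item The Peirce rules show that the subalgebra $\bigoplus V_{ii}=\bigoplus\Real c_i$ is maximal associative among elements commuting with the frame.
\item A regular element $x$ (degree $r$) has all $d_j$ primitive, since otherwise refining and perturbing eigenvalues yields an element of degree $r+1$.
\end{itemize}
To compare an arbitrary frame with the frame of a regular element, I would use density of regular elements: the set of regular elements is Zariski-open and nonempty, because it is where the $r\times r$ Gram-type determinant of $e,x,\dots,x^{r-1}$ is nonzero. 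So the generic element $\sum t_ic_i+\varepsilon z$ is regular. A continuity or upper-semicontinuity argument on the coefficients of the generic minimal polynomial then shows that the generic minimal polynomial of $\sum t_ic_i$ has degree $r$ and splits as $\prod_i(X-t_i)^{m_i}$, where the multiplicities $m_i$ count how many primitive pieces each $c_i$ contributes. Primitivity forces $m_i=1$, hence $\ell=r$. This step is where I expect the real work to lie.

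\emph{Step 4 (uniqueness).} Given Step 3, the numbers $\lambda_1(x),\dots,\lambda_r(x)$ are the roots, with multiplicity, of the generic characteristic polynomial of $x$ evaluated as above. Equivalently, each distinct value $\mu_j$ appears with multiplicity equal to the number of primitive idempotents in $d_j$. That number equals the rank of the Euclidean Jordan algebra $V(d_j,1)$ by Step 3, so it is independent of the chosen refinement.
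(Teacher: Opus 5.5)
The paper gives no proof of this statement; it only cites Faraut--Kor\'anyi. Your Step 1, the bound $\ell\le r$, and the argument that a regular element has a frame of $r$ primitive idempotents are sound. Step 3, however, has a genuine gap, and it sits exactly where the content of the theorem lies.

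Write $f(X;y)=X^r-a_1(y)X^{r-1}+\cdots$ for the generic minimal polynomial. You assert that $f(X;\sum_it_ic_i)=\prod_i(X-t_i)^{m_i}$ with $m_i$ ``the number of primitive pieces of $c_i$''. Nothing you list derives this. For a frame it says that each $t_i$ is a simple root of a degree-$r$ polynomial all of whose roots are among the $t_i$. That is just a restatement of $\ell=r$; it amounts to knowing that a primitive idempotent has $a_1(c)=1$.

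Continuity cannot supply this. The regular element $\sum_it_ic_i+\varepsilon z$ has a frame unrelated to $\{c_i\}$. Letting $\varepsilon\to0$ only shows that the roots of $f(X;\sum_it_ic_i)$ lie among the eigenvalues $t_i$ and $(t_i+t_j)/2$ of $L(\sum_it_ic_i)$. It gives no control over which of these occur or with what multiplicity. The facts $V(c,1)=\Real c$ and the maximality of $\bigoplus_i\Real c_i$ are never turned into a count either, and Step 4 inherits the gap.

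The missing idea is to carry the size-$r$ frames of regular elements over to $x$ by compactness, rather than to compare an arbitrary frame with them.

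\begin{enumerate}
\item Take regular $z_n\to x$ and write $z_n=\sum_{j=1}^r\lambda_j^{(n)}c_j^{(n)}$ as in your third bullet.
\item For idempotents, $|c|^2=\langle c,e\rangle\le|c|\,|e|$, and also $|\lambda_j^{(n)}|\le\|L(z_n)\|$. So a subsequence gives $c_j^{(n)}\to c_j$ and $\lambda_j^{(n)}\to\lambda_j$. The limits $c_j$ are pairwise orthogonal idempotents with $\sum_jc_j=e$ and $x=\sum_j\lambda_jc_j$.
\item Let $c_1',\dots,c_r'$ be any $r$ nonzero orthogonal idempotents summing to $e$. When the $s_j$ are distinct, $\sum_js_jc_j'$ is regular with minimal polynomial $\prod_j(X-s_j)$. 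By density, $f(X;\sum_js_jc_j')=\prod_j(X-s_j)$ for all $s\in\Real^r$.
\item Hence $a_1(c_j^{(n)})=1$, so $a_1(c_j)=1$ by continuity, and $c_j\neq0$.
\item The argument behind $\ell\le r$ then forces each $c_j$ to be primitive.
\item The identity in item 3 gives $f(X;x)=\prod_j(X-\lambda_j)$ for every such representation, which is the uniqueness. Combined with Step 1, it also yields your claim that all frames have $r$ elements.
\end{enumerate}

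Finally, each time you split $d=d'+d''$ (in Step 2, in the regular-element bullet, and above), you need the Peirce rule $V(d,1)\circ V(d,0)=\{0\}$. That is what guarantees $d'$ and $d''$ stay orthogonal to the remaining idempotents.
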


For each $x \in V$, the numbers $\lambda_i(x)$'s are called the \textit{Jordan eigenvalues} of $x$. We define
\[\tr\nolimits_{\mathcal{J}} x \coloneqq \sum_{i=1}^r \lambda_i(x) \quad \text{and} \quad \det\nolimits_{\mathcal{J}} x \coloneqq \prod_{i=1}^r \lambda_i(x).\]
By \cite[Proposition II.2.1 and Theorem III.1.2]{Faraut_book_Analysis_symmetric_cones}, $\tr\nolimits_{\mathcal{J}}$ and $\det\nolimits_{\mathcal{J}}$ are polynomials on $V$.

The \textit{cone of squares} is the closed convex cone defined by $\P \coloneqq \{x^2 : x \in V\}$. In spectral terms, we have
\[\P = \{x \in V : \, \lambda_i(x) \geq 0 \;\; \text{for all} \;\; i = 1, \dots, r\}.\]
The interior of $\P$, called the \textit{symmetric cone} of $V$, is the open convex cone
\[\P_+ \coloneqq \inter(\P) = \{x \in V : \, \lambda_i(x) > 0  \;\; \text{for all} \;\; i = 1, \dots, r\}.\]

\subsection{Basic models}
We are interested in the following models of Euclidean Jordan algebras.

\begin{example}[$\Real^n$]
The Euclidean space $\Real^n$ is a Euclidean Jordan algebra with the Jordan product given by
\[a \circ b = (a_1b_1, \dots, a_n b_n), \quad \text{for} \;\; a = (a_1, \dots, a_n) \;\; \text{and} \;\; b = (b_1, \dots, b_n).\]
The unit element is $\one \coloneqq (1, \dots, 1)$, and a Jordan frame is formed by the standard coordinate vectors. Thus for every $x = (x_1, \dots, x_n) \in \Real^n$, the Jordan eigenvalues of $x$ are just $x_1, \dots, x_n$. In particular,
\[ \tr\nolimits_{\mathcal{J}} x = x_1 + \dots + x_n \eqqcolon \sigma_1(x) \quad \text{and} \quad \det\nolimits_{\mathcal{J}} x = x_1 \dots x_n \eqqcolon \sigma_n(x).\]
In addition, the symmetric cone is $\P_+ = \Real^n_+$.
\end{example}

\begin{example}[$\SS(n)$] \label{ex: Jordan algebra of symmetric matrices}
Let $\SS(n)$ denote the space of $n \times n$ real symmetric matrices. Then $\SS(n)$ is a Euclidean Jordan algebra equipped with the Jordan product given by
\[A \circ B = \frac{AB + BA}{2} \quad \text{for} \;\; A, B \in \SS(n).\]
Clearly the unit element is $I_n$, the identity matrix. In addition, a Jordan frame on $\SS(n)$ is
\[c_i = u_i \otimes u_i, \quad i = 1, \dots, n,\]
where $u_1, \dots, u_n$ is an orthonormal basis of $\Real^n$. Thus, Jordan eigenvalues are just the usual matrix eigenvalues. In particular, $\tr\nolimits_{\mathcal{J}}$ and $\det\nolimits_{\mathcal{J}}$ coincide with the usual trace and determinant, respectively. The symmetric cone is $\P_+ = \P(n)_+$, the cone of positive definite symmetric $n \times n$ matrices.
\end{example}

The following special model of Euclidean Jordan algebras provides an important insight for the proof of Corollaries \ref{cor: sigma_2 majorization} and \ref{cor: sigma_2 majorization in matrices}.

\begin{example}[Lorentz] \label{ex: Lorentz cone}
Let $W$ be a finite-dimensional real vector space with an inner product $\langle \cdot, \cdot \rangle_W$. The vector space $V = \Real \times W$ equipped with the product
\[(t,u) \circ (s,v) = (ts + \langle u, v \rangle_W, tv + su)\]
is a Euclidean Jordan algebra with unit element $e = (1, 0)$. For every $x = (t, u) \in V$, set
\[\begin{cases*}
c_1 = \frac{1}{2}\left(1, \frac{u}{|u|}\right), \; c_2 = \frac{1}{2}\left(1, -\frac{u}{|u|}\right) & if  $u \neq 0$,\\
c_1 = \frac{1}{2}\left(1, \frac{v}{|v|}\right), \; c_2 = \frac{1}{2}\left(1, -\frac{v}{|v|}\right) & if $u = 0$,
\end{cases*}\]
where $v$ is an arbitrary nonzero vector in $W$. Then $(c_1, c_2)$ is a Jordan frame and $x = (t,u) = (t+|u|)c_1 + (t-|u|)c_2$. Thus the Jordan eigenvalues of $x = (t,u) \in V$ are
\[\lambda_1(x) = t - |u| \quad \text{and} \quad \lambda_2(x) = t + |u|.\]
Consequently, $\rank(V) = 2$, and for $x = (t, u) \in V$,
\begin{equation} \label{eq: Lorentz trace and det}
\tr\nolimits_{\mathcal{J}} x = 2t \quad \text{and} \quad \det\nolimits_{\mathcal{J}} x = t^2 - |u|^2.
\end{equation}
The symmetric cone of $V$ is
\[\P_+ = \{(t,u) \in \Real \times W : \, t > |u|\}.\]
\end{example}

\section{Hyperbolic polynomials and proofs of the majorization results}

\subsection{Hyperbolic polynomials and the central ray hypothesis}
We begin this section by providing some basic facts about hyperbolic polynomials. Further background can be found in \cite{Bauschke-Guler-Lewis-Sendov_CJM2001_Hyperbolic_poly, Harvey-Lawson_CPAM2013_Garding, Renegar_FoundCompMath2006_Hyperbolic_programs, Wagner_BAMS2011_Multivariate_stable_polynomials}. We assume that $V$ is a finite-dimensional real vector space. Let $p$ be a homogeneous polynomial of degree $m$ on $V$. For any $a, x \in V$ with $p(a) \neq 0$, the one-variable polynomial $s \mapsto p(sa + x)$ can be written as
\begin{equation}\label{eq: chap Garding theory, factor of homo poly}
p(sa + x) = p(a)\prod_{j=1}^m (s + \lambda^a_j(x)), \quad \lambda^a_j(x) \in \Complex \;\, \forall j.
\end{equation}
We define $\lambda^a(x) \coloneqq (\lambda^a_1(x), \dots, \lambda^a_m(x)) \in \Complex^m$ modulo the permutation group $\Per_m$.

A homogeneous polynomial $p$ of degree $m$ on $V$ is \textit{hyperbolic with respect to $a$} (or \textit{$a$-hyperbolic}), if $p(a) > 0$ and $\lambda^a(x) \in \Real^m$ for every $x \in V$. In this case, the \textit{G\r{a}rding cone} $\Gamma_+ \subset V$ (also denoted by $\Gamma_+^a$, or $\Gamma(p)_+$) is defined by
\[ \Gamma_+ = \{x \in V :\, \lambda^a_j(x) > 0 \;\; \text{for all} \;\; j = 1, \dots, m\}. \]
The closure of $\Gamma_+$ in $V$ is given by
\[\Gamma = \overline{\Gamma_+} = \{x \in V : \lambda^a_j(x) \geq 0 \;\; \text{for all} \;\; j = 1, \dots, m\}.\]
The fundamental result in the theory of hyperbolic polynomials is recorded in the following.

\begin{theorem}[\cite{Garding_JMM1959_inequality_hyperbolic}, see also \cite{Harvey-Lawson_CPAM2013_Garding, Renegar_FoundCompMath2006_Hyperbolic_programs}] \label{thm: Garding}
Suppose that $p$ is $a$-hyperbolic and $b \in \Gamma_+^a$. Then $p$ is also $b$-hyperbolic and $\Gamma_+^b = \Gamma_+^a$. Moreover, $\Gamma^a_+$ is the connected component of $\{p > 0\}$ containing $a$, and is convex.
\end{theorem}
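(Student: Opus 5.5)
The plan is to prove the four assertions in the order: (i) $\Gamma_+^a$ is the connected component of $\{p>0\}$ containing $a$; (ii) $p$ is $b$-hyperbolic for $b\in\Gamma^a_+$; (iii) $\Gamma^b_+=\Gamma^a_+$; (iv) convexity. Steps (iii) and (iv) come cheaply once (i) and (ii) are in place.

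\textbf{Step (i).} I will use three elementary facts.
\begin{itemize}
\item \emph{Positivity.} From \eqref{eq: chap Garding theory, factor of homo poly} with $s=0$ we get $p(x)=p(a)\prod_j\lambda^a_j(x)$, so $\Gamma^a_+\subset\{p>0\}$.
\item \emph{Star shape and openness.} Since $\lambda^a\bigl((1-t)a+tx\bigr)=(1-t)\one+t\lambda^a(x)$ as multisets, $\Gamma^a_+$ is star-shaped about $a$, hence connected. Also $x\in\Gamma_+^a$ iff $s\mapsto p(sa+x)$ has no root in $[0,\infty)$, so $\Gamma^a_+$ is open, using continuity of roots in $x$.
\item \emph{Relative closedness.} Take $x$ in the closure of $\Gamma^a_+$ with $p(x)>0$. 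By continuity $\lambda^a_j(x)\ge 0$ for all $j$, and none vanishes because $p(x)\neq0$, so $x\in\Gamma^a_+$.
\end{itemize}
An open, relatively closed, connected subset of $\{p>0\}$ containing $a$ is the component of $a$.

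\textbf{Step (ii), the main obstacle.} I will use a root-continuity (Hurwitz-type) argument that avoids convexity, which is not yet available. Fix $x\in V$, set $b_t=(1-t)a+tb$ for $t\in[0,1]$, and fix $\tau>0$. By the star shape in Step (i), $b_t\in\Gamma_+^a$, so $p(b_t)>0$. Consider
\[q_t(s)=p(sb_t+x+i\tau a).\]
It has degree $m$ with leading coefficient $p(b_t)\neq0$, so its $m$ roots vary continuously in $t$.
\begin{itemize}
\item At $t=0$ the roots are $-\lambda^a_j(x)-i\tau$, all in the open lower half-plane.
\item No root is ever real. If $s\in\Real$ were a root, then with $w=sb_t+x\in V$ we would have $p(w+i\tau a)=0$, i.e.\ $i\tau$ is a root of $\sigma\mapsto p(\sigma a+w)$. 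This contradicts $a$-hyperbolicity, since $\tau\neq0$.
\end{itemize}
Hence for $t=1$ and every $\tau>0$, all roots of $s\mapsto p(sb+x+i\tau a)$ lie in the open lower half-plane. Letting $\tau\to0^+$, the roots of $s\mapsto p(sb+x)$ lie in the closed lower half-plane. This polynomial has real coefficients, so its roots are closed under conjugation and are therefore all real. Together with $p(b)>0$, this shows $p$ is $b$-hyperbolic.

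\textbf{Steps (iii) and (iv).} Applying Step (i) with base point $b$, $\Gamma^b_+$ is the component of $\{p>0\}$ containing $b$. Since $b\in\Gamma^a_+$, that component is also $\Gamma^a_+$, so $\Gamma^b_+=\Gamma^a_+$. For convexity, let $x,y\in\Gamma^a_+$. Then $\Gamma^a_+=\Gamma^y_+$, and by the star shape from Step (i) applied with base point $y$, the segment $(1-t)y+tx$ lies in $\Gamma^y_+=\Gamma^a_+$.
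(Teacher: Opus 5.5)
The paper does not prove this theorem. It quotes it from G\r{a}rding and the expositions of Harvey--Lawson and Renegar, so there is no in-paper argument to compare against.

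Your proof is correct and complete, and it is essentially the standard argument in those cited sources (Renegar's streamlined version of G\r{a}rding's proof).
\begin{itemize}
\item In Step (i) you identify $\Gamma^a_+$ with the component of $\{p>0\}$ through $a$. This uses only the product formula $p(x)=p(a)\prod_j\lambda^a_j(x)$, the affine identity $\lambda^a((1-t)a+tx)=(1-t)\one+t\lambda^a(x)$, and continuity of the $a$-eigenvalues.
\item Step (ii) is the key homotopy. The imaginary shift $i\tau a$ keeps the roots of $s\mapsto p(sb_t+x+i\tau a)$ off the real axis. The nonvanishing leading coefficient $p(b_t)$ makes those roots move continuously in $t$, so they stay in the open lower half-plane. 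After letting $\tau\to0^+$, conjugation symmetry forces all roots to be real.
\item Steps (iii) and (iv) then follow formally. Convexity comes from star-shapedness about every point of the cone, once you know $\Gamma^y_+=\Gamma^a_+$ for each $y\in\Gamma^a_+$.
\end{itemize}

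Two facts are used implicitly, and both are standard. The unordered multiset of roots depends continuously on the coefficients when the degree and a nonzero leading coefficient are fixed. As a consequence, the number of roots in the open lower half-plane is locally constant along a path on which no root is real.
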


For an $a$-hyperbolic polynomial $p$ of degree $m$ on $V$, we denote
\[\tr^{p,a}(x) \coloneqq \sum_{j=1}^{m} \lambda^{p,a}_j(x) \quad \text{for} \;\; x \in V.\]

\begin{definition}[Central ray hypothesis] \label{def: central ray}
Let $V$ be a finite-dimensional Euclidean Jordan algebra with the unit element $e$. An $e$-hyperbolic polynomial $p$ of degree $m$ on $V$ is said to be \textit{$e$-central} if there is a constant $\kappa > 0$ such that
\[\tr^{p,e}(x) = \kappa \tr\nolimits_{\mathcal{J}}(x) \quad \text{for all} \;\; x \in V.\]
By homogeneity, the constant $\kappa$ is necessarily $\kappa = \frac{m}{\rank(V)}$.
\end{definition}
Further useful properties of the central ray hypothesis for hyperbolic polynomials in an arbitrary vector space can be obtained by analogously following the arguments in \cite[Appendix E]{Harvey-Lawson_CVPDE2023_det_major} (where vector space is $\SS(n)$).

\begin{example} \label{ex: Jordan det is e-central hyperbolic}
Let $V$ be a finite-dimensional Euclidean Jordan algebra of rank $r$ with unit element $e$ and symmetric cone $\P_+$. The determinant function $\det\nolimits_{\mathcal{J}}$ on $V$ is an $e$-central hyperbolic polynomial of degree $r$ whose G\r{a}rding cone is $\Gamma_+ = \P_+$. Indeed, for each $x \in V$, let $c_1, \dots, c_r$ be a Jordan frame such that
\[x = \sum_{i=1}^r \lambda_i(x) c_i.\]
Then for every $t \in \Real$, we have
\[\det\nolimits_{\mathcal{J}}(te + x) = \det\nolimits_{\mathcal{J}}\left(\sum_{i=1}^r (t + \lambda_i(x))c_i\right) = \prod_{i=1}^r (t+\lambda_i(x)).\]
Thus $\det\nolimits_{\mathcal{J}}$ is $e$-hyperbolic and $\Gamma_+ = \P_+$. The $e$-central condition is obvious.
\end{example}

A hyperbolic polynomial in $\Real^n$ is said to be \textit{stable} if its G\r{a}rding cone contains $\Real^n_+$. Stable hyperbolic polynomials have only nonnegative coefficients (see \cite{Choe-Oxley-Sokal-Wagner_AdvApplMath2004_Homogeneous_multivariate_polynomials, Wagner_BAMS2011_Multivariate_stable_polynomials}). The combination of this fact and \cite[Basic Lemma 2.1]{Harvey-Lawson_CVPDE2023_det_major} (see also \cite[Fact 2.2]{Gurvits_EJC2008_hyperbolic_poly}) gives the following result which plays a crucial role in the proof of the main result. 

\begin{lemma} \label{lem: chap Garding theory, 1-central real stable homo poly majorizes det}
Suppose that $p$ is a $\one$-central stable hyperbolic polynomial of degree $m$ on $\Real^n$. Then for every $x = (x_1, \dots, x_n) \in \Real^n_+$, we have
\[p(x)^{\frac{1}{m}} \geq p(\one)^{\frac{1}{m}} (x_1 \dots x_n)^{\frac{1}{n}}.\]
\end{lemma}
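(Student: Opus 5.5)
The approach is to combine the nonnegativity of the coefficients of $p$ with a weighted arithmetic--geometric mean inequality; the central ray hypothesis will say exactly that the resulting weighted geometric mean is $(x_1\cdots x_n)^{m/n}$. Write
\[p(x) = \sum_{|\alpha| = m} c_\alpha x^\alpha, \qquad x^\alpha = x_1^{\alpha_1}\cdots x_n^{\alpha_n}.\]
Since $\Gamma(p)_+ \supset \Real^n_+$, $p$ is stable, so by the fact quoted just before the lemma, $c_\alpha \ge 0$ for all $\alpha$. Also $p(\one) = \sum_\alpha c_\alpha > 0$, because $\one \in \Real^n_+ \subset \Gamma(p)_+$.

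\textbf{Step 1 (central hypothesis in terms of coefficients).} Expand \eqref{eq: chap Garding theory, factor of homo poly} with $a = \one$:
\[p(s\one + x) = p(\one)\Bigl(s^m + s^{m-1}\tr^{p,\one}(x) + \cdots\Bigr).\]
Compare this with the Taylor expansion in $s$: the coefficient of $s^{m-1}$ in $p(s\one + x)$ is $Dp(\one)[x] = \sum_i x_i\,\partial_i p(\one)$. Hence
\[\tr^{p,\one}(x) = \frac{1}{p(\one)}\sum_i x_i\,\partial_i p(\one).\]
On $\Real^n$ we have $\tr_{\mathcal J} = \sigma_1$ and rank $n$, so $\one$-centrality is equivalent to
\[\partial_i p(\one) = \frac{m}{n}\,p(\one) \quad \text{for each } i.\]
In coefficients this reads
\[\sum_\alpha c_\alpha\,\alpha_i = \frac{m}{n}\sum_\alpha c_\alpha .\]

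\textbf{Step 2 (weighted AM--GM).} Set $w_\alpha = c_\alpha/p(\one)$. These are nonnegative weights summing to $1$. For $x \in \Real^n_+$, weighted AM--GM (equivalently, concavity of $\log$) gives
\[\frac{p(x)}{p(\one)} = \sum_\alpha w_\alpha x^\alpha \ \ge\ \prod_\alpha (x^\alpha)^{w_\alpha} = \prod_{i=1}^n x_i^{\sum_\alpha w_\alpha \alpha_i}.\]
By Step 1 each exponent equals $m/n$, so the right-hand side is $(x_1\cdots x_n)^{m/n}$. Taking $m$-th roots yields the claim.

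The only point needing care is Step 1, namely identifying $\tr^{p,\one}$ with the normalized gradient $\nabla p(\one)/p(\one)$. This is a routine comparison of the $s^{m-1}$ coefficients. The nonnegativity of the coefficients is the substantive input, and it is taken from the cited literature on stable polynomials.
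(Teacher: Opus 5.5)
Your proof is correct and takes the same route as the paper. The paper gets the lemma by combining the nonnegativity of the coefficients of stable polynomials with Harvey--Lawson's Basic Lemma 2.1 (see also Gurvits's Fact 2.2), which is exactly your Step 2: weighted AM--GM, with the central ray condition $\partial_i p(\one) = \frac{m}{n}p(\one)$ fixing the exponents. Your Step 1, identifying $\tr^{p,\one}$ with $\langle \nabla p(\one), \cdot\rangle / p(\one)$, is the identity the paper derives later from the logarithmic derivative of $p(a+tx)$, so you have simply made the cited lemma self-contained.
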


Note that Lemma \ref{lem: chap Garding theory, 1-central real stable homo poly majorizes det} is the same as Theorem \ref{thm: majorization in Jordan algebra} when $V = \Real^n$.

\subsection{Proof of the majorization results}
In this subsection, we prove the main result and its consequences. The proof of Theorem \ref{thm: majorization in Jordan algebra} follows the same strategy as the proof of \cite[Main Theorem 1.3]{Harvey-Lawson_Duke2025_det_major}.

\begin{proof}[Proof of Theorem \ref{thm: majorization in Jordan algebra}]
We begin by fixing $a \in \P_+ \subset \Gamma_+$. By Theorem \ref{thm: spectral theorem for Jordan algebras}, there exists a Jordan frame $c_1, \dots, c_r \in V$ such that
\[a = \sum_{i=1}^r \alpha_i c_i,\]
where $\alpha_1, \dots, \alpha_r > 0$ are the Jordan eigenvalues of $a$. We consider a real polynomial in $\Real^r$ defined by
\[Q(x) \coloneqq p(x_1 c_1 + \dots + x_r c_r) \quad \text{for} \;\; x = (x_1, \dots, x_r) \in \Real^r.\]
For convenience, we define the linear map $L : \Real^r \to V$ by
\[L(x) = \sum_{i=1}^r x_i c_i \quad \text{for} \;\; x = (x_1, \dots, x_r) \in \Real^r.\]
In particular, $L(\one) = e$, the unit element of $V$, and $Q(x) = p(L(x))$. Important properties of $Q$ are proved in the following claims.

\textbf{Claim 1:} $Q$ is $m$-homogeneous and $Q(\one) = p(e)$.\\
\textit{Proof of Claim:} Since $p$ is $m$-homogeneous, for any $s > 0$ and $x \in \Real^r$, we have
\[Q(sx) = p(L(sx)) = p(sL(x)) = s^m p(L(x)) = s^m Q(x).\]
Moreover, $Q(\one) = p(L(\one)) = p(e)$.

\textbf{Claim 2:} $Q$ is stable.\\
\textit{Proof of Claim:} Let $v = (v_1, \dots, v_r) \in \Real^r_+$. Then since $(c_1, \dots, c_r)$ is a Jordan frame, the point $b =L(v)$ has eigenvalues $v_1, \dots, v_r > 0$, so $b \in \P_+$. Since $\P_+ \subset \Gamma_+$ by the assumption, $p$ is $b$-hyperbolic. Consequently, $Q(v) = p(b) > 0$ and for every $x \in \Real^r$, the one-variable polynomial
\[t \mapsto Q(tv + x) = p(tb + L(x))\]
has only real roots. Therefore $Q$ is $v$-hyperbolic. Since $v \in \Real^r_+$ is arbitrary and $\Real^r_+$ is connected, by Theorem \ref{thm: Garding}, we conclude that $Q$ is stable.

\textbf{Claim 3:} $Q$ is $\one$-central.\\
\textit{Proof of Claim:} Note that for every $x \in \Real^r$, the proof of Claim 2 yields
\[\lambda^{Q,\one}(x) = \lambda^{p,e}(L(x)).\]
Combining this and the assumption that $p$ is $e$-central, for every $x \in \Real^r$, we have
\begin{align*}
\tr^{Q, \one}(x) = \tr^{p,e}(L(x)) &= \frac{m}{r} \tr\nolimits_{\mathcal{J}}(L(x)) \\
&= \frac{m}{r}(x_1 + \dots + x_r).
\end{align*}
This proves that $Q$ is $\one$-central.

By these properties of $Q$, we apply Lemma \ref{lem: chap Garding theory, 1-central real stable homo poly majorizes det} to conclude that for every $x = (x_1, \dots, x_r) \in \Real^r_+$,
\[p(L(x))^\frac{1}{m} = Q(x)^{\frac{1}{m}} \geq Q(\one)^{\frac{1}{m}}(x_1 \dots x_r)^{\frac{1}{r}} = p(e)^{\frac{1}{m}}(x_1 \dots x_r)^{\frac{1}{r}}.\]
In particular, at $x = (\alpha_1, \dots, \alpha_r) \in \Real^r_+$, we get
\[p(a)^{\frac{1}{m}} \geq p(e)^{\frac{1}{m}}(\alpha_1 \dots \alpha_r)^{\frac{1}{r}} = p(e)^{\frac{1}{m}} \left(\det\nolimits_{\mathcal{J}}a\right)^{\frac{1}{r}}.\]
The proof is complete.
\end{proof}

We now prove Corollaries \ref{cor: sigma_2 majorization} and \ref{cor: sigma_2 majorization in matrices}. The idea is to apply Theorem \ref{thm: majorization in Jordan algebra} to the Lorentz model of Euclidean Jordan algebras (Example \ref{ex: Lorentz cone}).

\begin{proof}[Proof of Corollary \ref{cor: sigma_2 majorization}]
We denote $W = \one^\perp = \{y \in \Real^n : \langle y, \one \rangle = 0\}$. By the orthogonal decomposition $\Real^n = \Real\one \oplus W$, for every $x \in \Real^n$, we write
\[x = t\one + y \quad \text{where} \;\; t =\frac{\sigma_1(x)}{n} \;\; \text{and} \;\; y \in W.\]
Then for every $x \in \Real^n$, we have $|x|^2 = nt^2 + |y|^2$ and
\begin{align*}
\sigma_2(x) = \frac{\sigma_1(x)^2}{2} - \frac{|x|^2}{2} &= \frac{\sigma_1(x)^2}{2} - \frac{nt^2 + |y|^2}{2}\\
&= \frac{n(n-1)}{2} t^2 - \frac{|y|^2}{2}.
\end{align*}
Thus
\begin{equation}\label{eq: sigma_2 and det as Jordan algebra}
\frac{\sigma_2(x)}{{n \choose 2}} = t^2 - \frac{|y|^2}{n(n-1)} = t^2 - |u|^2,
\end{equation}
where we denote $u = \frac{y}{\sqrt{n(n-1)}} \in W$. Consequently,
\begin{align*}
\Gamma(\sigma_2)_+ &\equiv \{x : \, \sigma_1(x) > 0, \; \sigma_2(x) > 0\}\\
&= \{(t,u) : \, t > 0, \; t^2 - |u|^2 > 0\}\\
&=  \{(t,u) : \, t > |u|\}.
\end{align*}
Thus $\Gamma(\sigma_2)_+$ can be identified with the symmetric cone $\P_+$ of the Lorentz Jordan algebra $V = \Real\one \oplus W$ (see Example \ref{ex: Lorentz cone}) under the identification map $x \mapsto (t, u)$. In this way, the unit element $e = (1,0)$ of $V$ is identified with the vector $\one \in \Real^n$.

Now, let $p$ be a $\one$-central hyperbolic polynomial of degree $m$ on $\Real^n$ whose G\r{a}rding cone contains $\Gamma(\sigma_2)_+$. Then by the identification map $x \mapsto (t, u)$, $p$ is also an $e$-hyperbolic polynomial on $V$ whose G\r{a}rding cone (in $V$) contains $\P_+$. Moreover, since $p$ is $\one$-central in $\Real^n$, for every $x \in \Real^n$, we have
\[\tr^{p, e}(t, u) = \tr^{p, \one}(x) = \frac{m}{n} \sigma_1(x) = mt = \frac{m}{2} \tr\nolimits_{\mathcal{J}}(t,u),\]
where we have used \eqref{eq: Lorentz trace and det} to obtain the last equality. Thus $p$ is also $e$-central in $V$. Applying Theorem \ref{thm: majorization in Jordan algebra} for $p$ in $V$, we get
\[p(t,u)^{\frac{1}{m}} \geq p(e)^{\frac{1}{m}} \left(\det\nolimits_{\mathcal{J}}(t,u)\right)^{\frac{1}{2}} \quad \text{for every} \;\; (t,u) \in \P_+.\]
Recalling \eqref{eq: sigma_2 and det as Jordan algebra} and the fact that $\det\nolimits_{\mathcal{J}}(t,u) = t^2 - |u|^2$ by \eqref{eq: Lorentz trace and det}, we conclude that
\[p(x)^{\frac{1}{m}} \geq p(\one)^{\frac{1}{m}} \left(\frac{\sigma_2(x)}{{n \choose 2}}\right)^{\frac{1}{2}} \quad \text{for every} \;\; x \in \Gamma(\sigma_2)_+.\]
The proof is complete.
\end{proof}

\begin{proof}[Proof of Corollary \ref{cor: sigma_2 majorization in matrices}]
The proof is similar to that of Corollary \ref{cor: sigma_2 majorization}. We begin by denoting $W = \{Y \in \SS(n) : \sigma_1(Y) = 0\}$. By the orthogonal decomposition $\SS(n) = \Real I_n \oplus W$, for every $X \in \SS(n)$, we write
\[X = tI_n + Y \quad \text{where} \;\; t =\frac{\sigma_1(X)}{n} \;\; \text{and} \;\; Y \in W.\]
Then for every $X \in \SS(n)$, we have $|X|^2 = nt^2 + |Y|^2$ and
\begin{align*}
\sigma_2(X) = \frac{\sigma_1(X)^2}{2} - \frac{|X|^2}{2} &= \frac{\sigma_1(X)^2}{2} - \frac{nt^2 + |Y|^2}{2}\\
&= \frac{n(n-1)}{2} t^2 - \frac{|Y|^2}{2}.
\end{align*}
Thus
\begin{equation}\label{eq: sigma_2 and det as Jordan algebra matrices}
\frac{\sigma_2(X)}{{n \choose 2}} = t^2 - \frac{|Y|^2}{n(n-1)} = t^2 - |U|^2,
\end{equation}
where we denote $U = \frac{1}{\sqrt{n(n-1)}}Y \in W$. Consequently,
\begin{align*}
\Gamma(\sigma_2)_+ &\equiv \{X : \, \sigma_1(X) > 0, \; \sigma_2(X) > 0\}\\
&= \{(t,U) : \, t > 0, \; t^2 - |U|^2 > 0\}\\
&=  \{(t,U) : \, t > |U|\}.
\end{align*}
Thus $\Gamma(\sigma_2)_+$ can be identified with the symmetric cone $\P_+$ of the Lorentz Jordan algebra $V = \Real I_n \oplus W$ (see Example \ref{ex: Lorentz cone}) under the identification map $X \mapsto (t, U)$. In this way, the unit element $e = (1,0)$ of $V$ is identified with the identity matrix $I_n$.

Now, let $P$ be a $I_n$-central hyperbolic polynomial of degree $m$ on $\SS(n)$ whose G\r{a}rding cone contains $\Gamma(\sigma_2)_+$. Then by the identification map $X \mapsto (t, U)$, $P$ is also an $e$-hyperbolic polynomial on $V$ whose G\r{a}rding cone (in $V$) contains $\P_+$. Moreover, since $P$ is $I_n$-central in $\SS(n)$, for every $X \in \SS(n)$, we have
\[\tr^{P, e}(t, U) = \tr^{P, I_n}(X) = \frac{m}{n} \sigma_1(X) = mt = \frac{m}{2} \tr\nolimits_{\mathcal{J}}(t,U),\]
where we have used \eqref{eq: Lorentz trace and det} to obtain the last equality. Thus $P$ is also $e$-central in $V$. Applying Theorem \ref{thm: majorization in Jordan algebra} for $P$ in $V$, we get
\[P(t,U)^{\frac{1}{m}} \geq P(e)^{\frac{1}{m}} \left(\det\nolimits_{\mathcal{J}}(t,U)\right)^{\frac{1}{2}} \quad \text{for every} \;\; (t,U) \in \P_+.\]
Recalling \eqref{eq: sigma_2 and det as Jordan algebra matrices} and the fact that $\det\nolimits_{\mathcal{J}}(t,U) = t^2 - |U|^2$ by \eqref{eq: Lorentz trace and det}, we conclude that
\[P(X)^{\frac{1}{m}} \geq P(I_n)^{\frac{1}{m}} \left(\frac{\sigma_2(X)}{{n \choose 2}}\right)^{\frac{1}{2}} \quad \text{for every} \;\; X \in \Gamma(\sigma_2)_+.\]
The proof is complete.
\end{proof}

\section{On $\sigma_k$ majorization} \label{sec: On sigma_k}

For each $1 \leq k \leq n$, the elementary symmetric function $\sigma_k$ on $\Real^n$, defined by
\[\sigma_k(x) = \sum_{i_1 < \dots < i_k} x_{i_1} \dots x_{i_k} \quad \text{for} \;\; x = (x_1, \dots, x_n) \in \Real^n,\]
is a $\one$-central hyperbolic polynomial. We denote its G\r{a}rding cone by $\Gamma(n,k)_+$.

For each $1 \leq k \leq n$ and $X \in \SS(n)$, we define
\[\sigma_k(X) \coloneqq \sigma_k(\lambda(X)),\]
where $\lambda(X) \coloneqq (\lambda_1(X), \dots, \lambda_n(X))$ (defined modulo the permutation group $\mathfrak{S}_n$) is the vector whose coordinates are the usual matrix eigenvalues of $X$. Then $\sigma_k$ is an $I_n$-central hyperbolic polynomial on $\SS(n)$. We denote its G\r{a}rding cone by $\Gamma_{\SS}(n,k)_+$. Then
\[\Gamma_{\SS}(n,k)_+ = \{X \in \SS(n) : \lambda(X) \in \Gamma(n,k)_+\}.\]

The main point of this section is that for $3 \leq k \leq n-1$, the central ray hypothesis together with cone containment is not sufficient for hyperbolic $\sigma_k$ majorization, either on $\Real^n$ or on $\SS(n)$, in contrast with Corollary \ref{cor: sigma_2 majorization} and Lemma \ref{lem: chap Garding theory, 1-central real stable homo poly majorizes det}. After recalling two useful facts about elementary symmetric functions, we construct $\one$-central ($I_n$-central) hyperbolic polynomials that violate $\sigma_k$ majorization for every $3 \leq k \leq n-1$.

\begin{lemma}[see e.g. \cite{Lin-Trudinger_BAMS1994_Inequalities_elem_sym_funct}] \label{lem: sum of n-k+1 coordinates is positive}
Let $2 \leq k \leq n$. If $x = (x_1, \dots, x_n) \in \Gamma(n,k)_+$, then
\[(x_1, \dots, x_{n-\ell}) \in \Gamma(n-\ell, k-\ell)_+ \quad \text{for all} \;\; 1 \leq \ell \leq k-1. \]
In particular, if $x = (x_1, \dots, x_n) \in \Gamma(n,k)_+$, then
\[x_1 + \dots + x_{n-k+1} > 0.\]
\end{lemma}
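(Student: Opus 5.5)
The statement has two parts, and the second follows from the first by induction. So the plan is to prove the case $\ell = 1$ and iterate. The key tool is the derivative characterization of Gårding cones: if $p$ is $a$-hyperbolic of degree $m\ge 2$, then the directional derivative $D_a p$ is $a$-hyperbolic. Moreover its Gårding cone contains $\Gamma(p)_+$; this follows from Rolle's theorem applied to $s\mapsto p(sa+x)$, whose roots are all real. We apply this not with $a=\one$ but with $a$ a coordinate vector. First, every standard basis vector $e_i$ lies in the closure $\overline{\Gamma(n,k)_+}$, since $\Gamma(n,k)_+\supset\Real^n_+$. Next, $\partial_{x_n}\sigma_k(x) = \sigma_{k-1}(x_1,\dots,x_{n-1})$, where we write $x'=(x_1,\dots,x_{n-1})$.

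We want $x'\in\Gamma(n-1,k-1)_+$ whenever $x\in\Gamma(n,k)_+$. Fix $x\in\Gamma(n,k)_+$ and use $x$ itself as the hyperbolicity direction. For any $y$, the polynomial $g(s)=\sigma_k(sx+y)$ has only real roots. Its derivative in direction $e_n$ is $D_{e_n}\sigma_k$, which is not a derivative in $s$, so Rolle does not apply directly. Instead we use the standard fact (Gårding/Renegar) that for $b$ in the closed cone, $D_b p$ is hyperbolic with respect to any $a$ in the open cone, and $\Gamma(D_bp)_+\supset\Gamma(p)_+$. A cleaner self-contained route is to verify directly that $\sigma_{k-1}(x')>0$ and that $x'$ lies in the right component. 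For this, write $\sigma_k(x)=x_n\sigma_{k-1}(x')+\sigma_k(x')$ and use that $\Gamma(n,k)_+=\{\sigma_1>0,\dots,\sigma_k>0\}$. If citing the derivative-cone fact is allowed, the argument is immediate: $D_{e_n}\sigma_k$ restricted to the first $n-1$ variables is $\sigma_{k-1}$, and the cone containment gives $x'\in\Gamma(n-1,k-1)_+$.

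For a more elementary argument, I would approximate $e_n$ by $e_n+\epsilon\one\in\Gamma(n,k)_+$. Then $D_{e_n+\epsilon\one}\sigma_k$ is hyperbolic with respect to $x$ and positive on $\Gamma(n,k)_+$ by Rolle-type interlacing. This uses the identity $D_b p(x)=p(x)\sum_j \frac{\lambda_j^{x}(b)}{\ldots}$, i.e.\ the formula $D_bp(x)/p(x)=\sum_j 1/\mu_j$ where the $\mu_j>0$ are the roots associated with the pair $(b,x)$ in the cone. Letting $\epsilon\to0$ gives $\sigma_{k-1}(x')\ge0$. Strictness is the main obstacle: a limit only yields a nonstrict inequality, and the sign conditions do not automatically place $x'$ in the open cone. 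I would handle it by noting that $\Gamma(n,k)_+$ is open. Replacing $x$ by $x-\delta e_n$, which still lies in the cone for small $\delta$, does not change $x'$. This only gives nonnegativity on an open set of $x$, so it must be combined with the fact that $\{x' : x\in\Gamma(n,k)_+\}$ is open. Then the values $\sigma_j(x')$, $j\le k-1$, are nonnegative on an open set. The point $x'$ lies in the closure of $\Gamma(n-1,k-1)_+$ and is an interior point of that closure, hence lies in the open cone, since a convex cone's closure has the same interior.

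Iterating $\ell$ times by dropping the last coordinate each time gives $(x_1,\dots,x_{n-\ell})\in\Gamma(n-\ell,k-\ell)_+$. Finally, take $\ell=k-1$. This gives $(x_1,\dots,x_{n-k+1})\in\Gamma(n-k+1,1)_+=\{\sigma_1>0\}$, which is the claim $x_1+\dots+x_{n-k+1}>0$.
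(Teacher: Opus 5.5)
Your main route is correct. The paper does not prove this lemma at all; it only cites Lin--Trudinger, so there is no in-paper argument to match. What you give is the G\r{a}rding-cone version of the classical argument. The map $x\mapsto D_{e_n}\sigma_k(x)=\sigma_{k-1}(x_1,\dots,x_{n-1})$ is $\one$-hyperbolic on $\Real^n$, with G\r{a}rding cone $\{x : x'\in\Gamma(n-1,k-1)_+\}$. So the containment $\Gamma(D_{e_n}\sigma_k)_+\supset\Gamma(n,k)_+$ is exactly the case $\ell=1$, and iterating, then taking $\ell=k-1$, finishes.

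The derivative-cone fact for $b$ on the boundary of the cone is true, but it needs $b\notin\E$, since otherwise $D_bp\equiv 0$. Here that holds because $-e_n\notin\overline{\Gamma(n,k)_+}\subset\{\sigma_1\ge 0\}$. The usual textbook proof instead shows $\partial_i\sigma_k>0$ on $\Gamma(n,k)_+$ and then uses $\Gamma(n,k)_+=\{\sigma_1>0,\dots,\sigma_k>0\}$; your phrasing avoids that explicit description.

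Your ``elementary'' variant is sound in outline, but it skips steps and is more roundabout than needed.
\begin{itemize}
\item You only derive $\sigma_{k-1}(x')\ge 0$, yet you then use $\sigma_j(x')\ge 0$ for every $j\le k-1$. That requires the nesting $\Gamma(n,k)_+\subset\Gamma(n,j+1)_+$, which you do not state.
\item The claim that ``$x'$ lies in the closure'' requires $\overline{\Gamma(n-1,k-1)_+}=\{\sigma_1\ge 0,\dots,\sigma_{k-1}\ge 0\}$. This is true, since a real-rooted $t\mapsto \sigma_{k-1}(t(1,\dots,1)+x')$ with nonnegative coefficients has no positive root. It is not automatic, though, because the closure of a set of strict inequalities is in general smaller than the set of the non-strict ones.
\item The shift $x\mapsto x-\delta e_n$ contributes nothing. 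The openness of the projection and $\inter\overline{C}=C$ for open convex $C$ do all the work.
\end{itemize}

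More simply, strictness comes for free from the identity you garbled. For $x\in\Gamma_+$ the correct formula is $D_bp(x)=p(x)\sum_j\lambda^x_j(b)$. Hence $\sigma_{k-1}(x')=\sigma_k(x)\sum_j\lambda^x_j(e_n)$. Every $\lambda^x_j(e_n)\ge 0$ because $e_n\in\overline{\Gamma(n,k)_+}$, and they are not all zero because $e_n\notin\E$. So $\sigma_{k-1}>0$ on the projection of $\Gamma(n,k)_+$. That projection is connected and contains $(1,\dots,1)\in\Real^{n-1}$, so by G\r{a}rding's theorem it lies in the component $\Gamma(n-1,k-1)_+$. This removes both the $\varepsilon\to 0$ limiting argument and any appeal to the derivative-cone fact.
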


Before stating the next lemma, we define, for $1 \leq k \leq n$, the linear function $L_k$ on $\Real^n$ by
\[L_k(x) = \frac{x_1 + \dots + x_{n-k+1}}{n-k+1} \quad \text{for} \;\; x \in \Real^n.\]

\begin{lemma} \label{lem: Trudinger's inequality}
For each $2 \leq k \leq n$, we have
\[\frac{\sigma_1(x)}{n} \geq \frac{n-k+1}{2n-k} L_k(x) \quad \text{for all} \;\; x \in \Gamma(n,k)_+.\]
\end{lemma}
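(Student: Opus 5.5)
The inequality is linear in $x$ and $\Gamma(n,k)_+$ is a convex cone, so I would reduce to a two-parameter family by symmetrization and then treat that family by hand. Clearing denominators, the statement to prove is
\[
(2n-k)\,\sigma_1(x) \;\geq\; n\,(x_1+\dots+x_{n-k+1}) \quad \text{for } x\in\Gamma(n,k)_+ .
\]

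\textbf{Step 1 (reduction).} Let $G\subset\Per_n$ be the subgroup of permutations preserving the two blocks $\{1,\dots,n-k+1\}$ and $\{n-k+2,\dots,n\}$. Both sides of the inequality are $G$-invariant linear functions of $x$. The cone $\Gamma(n,k)_+$ is convex (Theorem \ref{thm: Garding}) and $\Per_n$-invariant, because $\sigma_k$ is symmetric and $\one$ is fixed. Hence the $G$-average
\[
\bar x=(s,\dots,s,t,\dots,t), \qquad a \coloneqq n-k+1 \text{ copies of } s,\quad b\coloneqq k-1 \text{ copies of } t,
\]
lies in $\Gamma(n,k)_+$, and both sides of the inequality take the same values at $x$ and at $\bar x$. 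So it suffices to prove the inequality for $\bar x$. By Lemma \ref{lem: sum of n-k+1 coordinates is positive} we have $s>0$, so we may normalize $s=1$.

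\textbf{Step 2 (what must be shown).} With $s=1$ the target becomes
\[
a(n-k) + (2n-k)\,b\,t \;\geq\; 0, \qquad\text{i.e.}\qquad t\;\geq\; t_0\coloneqq -\frac{(n-k+1)(n-k)}{(2n-k)(k-1)}.
\]
This is automatic when $t\geq 0$. So the content is a lower bound on $t$ whenever $(1^{a},t^{b})\in\Gamma(n,k)_+$. Since $\Gamma(n,k)_+$ is the component of $\{\sigma_k>0\}$ containing $\one$, and the set of admissible $t$ is an interval containing $1$, it is enough to show that $(1^a,t_0^{\,b})\notin\Gamma(n,k)_+$.

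\textbf{Step 3 (exhibit an obstruction at $t_0$).} A point of $\Gamma(n,k)_+$ has $\sigma_j>0$ for all $j\le k$, so it suffices to find one $j\le k$ with $\sigma_j(1^a,t_0^{\,b})\le 0$. Here
\[
\sigma_j(1^a,t^b)=\sum_i \binom{a}{j-i}\binom{b}{i}t^i .
\]
As consistency checks: for $k=2$ the bound is attained, since $(1^{n-1},-\tfrac{n-2}{2})$ lies on the boundary $\{\sigma_2=0\}$ and satisfies the inequality with equality; for $k=n$ it reduces to $t\ge 0$, which holds in the positive orthant.

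I would first try $\sigma_2$ evaluated at $t_0$ to see whether it is already nonpositive. Failing that, I would try $\sigma_k$ itself, or a Newton–Maclaurin type inequality relating $\sigma_{k-1}$ and $\sigma_k$ on the two-block family. A further tool is Lemma \ref{lem: sum of n-k+1 coordinates is positive} with a suitable $\ell$: it drops some of the $t$'s and reduces to a smaller cone $\Gamma(n-\ell,k-\ell)_+$.

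\textbf{Main obstacle.} The hardest step is identifying which $\sigma_j$, or which reduced cone, actually cuts off exactly at $t_0$ for general $k$. The linear constraint alone, that any $n-k+1$ coordinates have positive sum, is too weak. The crude use of Lemma \ref{lem: sum of n-k+1 coordinates is positive} with $\ell=k-2$, which gives $t>-(n-k)/2$, is also insufficient for $k\ge 3$. So a genuinely higher-degree computation on the family $(1^a,t^b)$ will be needed.
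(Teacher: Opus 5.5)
Your Steps 1 and 2 are correct. Averaging over the block-preserving subgroup is legitimate because $\Gamma(n,k)_+$ is convex and $\Per_n$-invariant, and on the family $(1^a,t^b)$ the lemma is indeed equivalent to $t\ge t_0$. But Step 3, which carries the entire content of the lemma, is not carried out. You list candidate tools ($\sigma_2$, $\sigma_k$, Newton--Maclaurin, the restriction lemma) and flag the choice yourself as the main obstacle, so as it stands there is no proof.

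The search is also aimed at the wrong target: for $k\ge 3$ nothing ``cuts off exactly at $t_0$''. For $n=4$, $k=3$ one has $\sigma_3(1,1,t,t)=2t(1+t)$, so $(1,1,t,t)\in\Gamma(4,3)_+$ if and only if $t>0$, whereas $t_0=-\tfrac15$. The lemma is not sharp on your family when $k\ge3$, so you should look for a cruder one-sided bound rather than the exact boundary.

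The missing ingredient is the per-coordinate bound of Kuo--Trudinger that the paper invokes:
\[
x_i \;\ge\; -\frac{n-k}{n(k-1)}\,\sigma_1(x)\qquad \text{for } x\in\Gamma(n,k)_+,\ 1\le i\le n .
\]
It follows from exactly the tools you named. Suppose $x_i<0$, and let $y\in\Real^{n-1}$ be $x$ with its $i$-th coordinate deleted.
\begin{enumerate}
\item By the restriction lemma and permutation invariance, $y\in\Gamma(n-1,k-1)_+$.
\item We have $0<\sigma_k(x)=x_i\,\sigma_{k-1}(y)+\sigma_k(y)$.
\item The chained Newton--Maclaurin inequality gives $\sigma_k(y)/\sigma_{k-1}(y)\le \frac{n-k}{k(n-1)}\sigma_1(y)$.
\item Combining these, $-x_i<\frac{n-k}{k(n-1)}\bigl(\sigma_1(x)-x_i\bigr)$, which rearranges to the displayed bound.
\end{enumerate}

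Applied to one $t$-coordinate of $(1^a,t^b)$, the bound yields $-t<\frac{n-k}{n(k-1)}(a+bt)$, that is, $t>t_0$, which closes your Step 3. At that point, however, the symmetrization is superfluous. The paper simply sums the bound over $i=n-k+2,\dots,n$ to get $\sum_{i>n-k+1}x_i\ge-\frac{n-k}{n}\sigma_1(x)$. Then $(n-k+1)L_k(x)=\sigma_1(x)-\sum_{i>n-k+1}x_i\le\frac{2n-k}{n}\sigma_1(x)$.
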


\begin{proof}
Let $x = (x_1, \dots, x_n) \in  \Gamma(n,k)_+$. By \cite[Eq. (1.16)]{Kuo-Trudinger_IUMJ2007_New_max_principle}, we have
\[x_i \geq - \frac{n-k}{n(k-1)} \sigma_1(x) \quad \text{for each} \;\; i = 1, \dots, n.\]
Summing this over the last $(k-1)$ coordinates yields
\[\sum_{i=n-k+2}^{n} x_i \geq - \frac{n-k}{n} \sigma_1(x).\]
Therefore,
\[(n-k+1)L_k(x) = \sigma_1(x) - \sum_{i=n-k+2}^{n} x_i \leq \frac{2n-k}{n} \sigma_1(x).\]
\end{proof}

We now construct $\one$-central ($I_n$-central) hyperbolic polynomials violating $\sigma_k$ majorization for all $n \geq 4$ and $3 \leq k \leq n-1$.

\begin{example}[$\Real^n$] \label{ex: hyperbolic poly violate sigma_k in R^n}
For $2 \leq k \leq n$, note that
\[\frac{n-k+1}{2n-k} > \frac{1}{k} \quad \Longleftrightarrow \quad 2 < k < n.\]
Let $3 \leq k \leq n-1$. We fix a rational number $q = \frac{r}{m}$, $r, m \in \mathbb{N}$, such that
\[\frac{1}{k} < q < \frac{n-k+1}{2n-k}.\]
Define the linear function $L$ on $\Real^n$ by
\[L(x) = \frac{1}{1-q} \left(\frac{\sigma_1(x)}{n} - q L_k(x)\right) \quad \text{for} \;\; x \in \Real^n.\]
Then $L(x) > 0$ for all $x \in \Gamma(n,k)_+$ by Lemma \ref{lem: Trudinger's inequality}, and $L(\one) = 1$. We consider the polynomial $p$ on $\Real^n$ given by
\[p(x) = L_k(x)^r L(x)^{m-r}  \quad \text{for} \;\; x \in \Real^n.\]
Then $p$ is $m$-homogeneous, $p(\one) = 1$, and for all $t \in \Real$, $x \in \Real^n$, 
\[p(t\one + x) = (t + L_k(x))^r (t + L(x))^{m-r}.\]
Thus $p$ is $\one$-hyperbolic with G\r{a}rding cone
\[\Gamma(p)_+ = \{x \in \Real^n : L_k(x) > 0, \; L(x) > 0\}.\]
Consequently, $\Gamma(p)_+ \supset \Gamma(n,k)_+$ since $L(x) > 0$ and $L_k(x) > 0$ (by Lemma \ref{lem: sum of n-k+1 coordinates is positive}) for all $x \in \Gamma(n,k)_+$. Moreover, $p$ is $\one$-central since
\[\tr^{p, \one}(x) = r L_k(x) + (m-r) L(x) = m \left(qL_k(x) + (1-q)L(x) \right) = \frac{m \sigma_1(x)}{n}.\]

We put $\alpha = n-k+1$, $\beta = k-1$, and consider the point
\[x_{\varepsilon} = (\underbrace{\varepsilon, \dots, \varepsilon}_{\alpha}, \underbrace{1, \dots, 1}_{\beta}), \quad \varepsilon > 0.\]
Then $x_{\varepsilon} \in \Real^n_+ \subset \Gamma(n,k)_+$. Moreover, $L_k(x_\varepsilon) = \varepsilon$, and
\[L(x_{\varepsilon}) = \frac{\frac{\alpha\varepsilon + \beta}{n} - q\varepsilon}{1-q} \to \frac{\beta}{n(1-q)} \quad \text{as} \;\; \varepsilon \to 0.\]
Consequently, $p(x_{\varepsilon})^{\frac{1}{m}} \sim C \varepsilon^q.$ Meanwhile, $\sigma_k(x_{\varepsilon}) = \alpha \varepsilon + O(\varepsilon^2)$, so $\sigma_k(x_{\varepsilon})^{\frac{1}{k}} \sim C'\varepsilon^{\frac{1}{k}}$. Since $q > \frac{1}{k}$, this implies that for sufficiently small $\varepsilon > 0$, we have
\[p(x_{\varepsilon})^{\frac{1}{m}} < p(\one)^{\frac{1}{m}} \left(\frac{\sigma_k(x_\varepsilon)}{{n \choose k}}\right)^{\frac{1}{k}}.\]
So $\sigma_k$ majorization fails for $p$.
\end{example}

\begin{example}[$\SS(n)$] \label{ex: hyperbolic poly violate sigma_k in matrices}
Let $3 \leq k \leq n-1$ and recall the hyperbolic polynomial $p$ of degree $m$ on $\Real^n$ constructed in Example \ref{ex: hyperbolic poly violate sigma_k in R^n}. For each $X = (X_{ij}) \in \SS(n)$, we denote
\[d(X) \coloneqq (X_{11}, \dots, X_{nn}).\]
Consider the polynomial $F$ on $\SS(n)$ defined by
\[F(X) = p(d(X)) \quad \text{for} \;\; X \in \SS(n).\]
Then for every $t \in \Real$ and $X \in \SS(n)$, we have
\[F(tI_n + X) = p(t\one + d(X)).\]
Thus $F$ is $I_n$-hyperbolic with
\[\lambda^{F, I_n}(X) = \lambda^{p, \one}(d(X)).\]
It follows that
\[\tr^{F, I_n}(X) = \tr^{p, \one}(d(X)) = \frac{m\sigma_1(d(X))}{n} = \frac{m \tr(X)}{n},\]
which means that $F$ is $I_n$-central. In addition,
\begin{equation} \label{eq: example of matrix nonmajor, Garding cone}
\Gamma(F)_+ = \{X \in \SS(n) : d(X) \in \Gamma(p)_+\}.
\end{equation}
To see cone containment, let $X \in \Gamma_{\SS}(n,k)_+$. Then $\lambda(X) \in \Gamma(n,k)_+$. Moreover, by Schur-Horn theorem and Birkhoff’s theorem (see \cite[Theorems 4.3.45 and 4.3.49]{Horn-Johnson_Book_Matrix}), we have
\[d(X) \in \mathrm{conv}\{\pi(\lambda(X)) : \pi \in \mathfrak{S}_n\}.\]
Since $\Gamma(n,k)_+$ is convex and invariant under coordinate permutations, we conclude that $d(X) \in \Gamma(n,k)_+ \subset \Gamma(p)_+$. By \eqref{eq: example of matrix nonmajor, Garding cone}, this implies $X \in \Gamma(F)_+$. Hence $\Gamma(F)_+ \supset \Gamma_{\SS}(n,k)_+$.

Now, we consider the positive definite diagonal matrix
\[X_{\varepsilon} \coloneqq \mathrm{diag}(\underbrace{\varepsilon, \dots, \varepsilon}_{n-k+1}, \underbrace{1, \dots, 1}_{k-1}), \quad \varepsilon > 0.\]
Following the argument in Example \ref{ex: hyperbolic poly violate sigma_k in R^n}, for sufficiently small $\varepsilon > 0$, we have
\[F(X_{\varepsilon})^{\frac{1}{m}} < F(I_n)^{\frac{1}{m}} \left(\frac{\sigma_k(X_\varepsilon)}{{n \choose k}}\right)^{\frac{1}{k}}.\]
So $\sigma_k$ majorization fails for $F$.
\end{example}

We note, however, that the polynomial constructed in Example \ref{ex: hyperbolic poly violate sigma_k in R^n} (resp. Example \ref{ex: hyperbolic poly violate sigma_k in matrices}) is not invariant under the permutation group $\mathfrak{S}_n$ (resp. the real orthogonal group $\mathrm{O}(n)$). Thus, if one additionally assumes the invariance condition, either on $\Real^n$ or on $\SS(n)$, it remains inconclusive whether the analogous $\sigma_k$ majorization holds for $3 \leq k \leq n-1$.

\section{Dual functions} \label{sec: Dual}

In this section, we always assume that $p$ is a hyperbolic polynomial of degree $m$ on a finite-dimensional real vector space $V$. The G\r{a}rding cone of $p$ is denoted by $\Gamma_+$ and its closure in $V$ is denoted by $\Gamma$. Define
\[f(x) = p(x)^{\frac{1}{m}}, \quad x \in \Gamma.\]
Then $f$ is $1$-homogeneous on $\Gamma$. Moreover, $f$ and $\log f$ are concave in $\Gamma_+$. Specifically, by \cite[Proposition D.3]{Harvey-Lawson_CVPDE2023_det_major}, we have
\[[D^2 f(a)](v,v) = -\frac{f(a)}{m^2} \Big(m|\lambda^a(v)|^2 - \sigma_1(\lambda^a(v))^2\Big) \quad \text{for all} \;\; a \in \Gamma_+ \;\; \text{and} \;\; v \in V,\]
and
\begin{equation} \label{eq: Hessian of log hyperbolic}
[D^2 \log f(a)](v, v) = - \frac{|\lambda^a(v)|^2}{m} \quad \text{for all} \;\; a \in \Gamma_+ \;\; \text{and} \;\; v \in V.
\end{equation}

The edge $\E$ of the G\r{a}rding cone $\Gamma_+$ (or $\Gamma$) is defined by
\[\E \coloneqq \Gamma \cap (-\Gamma).\]
The edge $\E$ is also the set of all $x \in V$ such that $\lambda^a(x) = 0$ for some $a \in \Gamma_+$, and hence for all $a \in \Gamma_+$ (see e.g. \cite{Harvey-Lawson_CPAM2013_Garding}).

The G\r{a}rding cone $\Gamma_+$ (or $\Gamma$) is said to be \textit{regular}\footnote{This is called \textit{complete} in \cite{Harvey-Lawson_CPAM2013_Garding}.} if its edge $\E = \{0\}$. If $\Gamma$ is regular, then the dual cone of $\Gamma$,
\[\Gamma^* \coloneqq \{u \in V^* :\, \langle u, x \rangle \geq 0 \;\; \text{for all} \;\; x \in \Gamma\},\]
is full-dimensional, and whose interior, $\Gamma^*_+$, satisfies $\Gamma^* = \overline{\Gamma^*_+}$. Moreover, the bipolar theorem gives $(\Gamma^*)^* = \Gamma$. Here, we identify $(V^*)^*$ with $V$.

\begin{proposition} \label{prop: gradient of log regular hyperbolic poly is real analytic diffeo}
Suppose that $p$ is a hyperbolic polynomial on $V$ with G\r{a}rding cone $\Gamma_+$. If $\Gamma$ is regular, then $\nabla \log f : \Gamma_+ \to \Gamma^*_+$ is a real-analytic diffeomorphism.
\end{proposition}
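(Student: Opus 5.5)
We prove the proposition by showing that $\nabla \log f$ is a real-analytic map from $\Gamma_+$ into $\Gamma^*_+$, that it is a local diffeomorphism, and that it is a bijection onto $\Gamma^*_+$. Real-analyticity is immediate, since $\log f = \frac1m \log p$ is analytic wherever $p>0$.

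\emph{Step 1: the image lies in $\Gamma^*_+$.} Fix $a \in \Gamma_+$ and $u = \nabla \log f(a)$. For $x \in \Gamma$, expanding \eqref{eq: chap Garding theory, factor of homo poly} at $s=1$ gives
\[\langle u, x\rangle = \frac{1}{m}\frac{d}{ds}\Big|_{s=0}\log p(a+sx) = \frac1m \sum_j \frac{1}{\lambda^x_j(a)}\cdots\]
It is more convenient to write it as $\langle u,x\rangle = \frac1m\,\tr^{p,a}(x)=\frac1m\sum_j \lambda^a_j(x)$. This holds because $p(sa+x)=p(a)\prod_j(s+\lambda_j^a(x))$ and hence $p(a+tx)=p(a)\prod_j(1+t\lambda^a_j(x))$. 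For $x\in\Gamma$ all $\lambda^a_j(x)\ge 0$, so $\langle u,x\rangle\ge0$. If $x \neq 0$, regularity ($\E=\{0\}$) forces $\lambda^a(x)\neq 0$, so $\langle u,x\rangle>0$. Since $\Gamma$ is a closed regular cone, strict positivity on $\Gamma\setminus\{0\}$ means $u$ lies in the interior $\Gamma^*_+$; this uses compactness of a cross-section of $\Gamma$.

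\emph{Step 2: local diffeomorphism.} By \eqref{eq: Hessian of log hyperbolic}, $D^2\log f(a)(v,v) = -|\lambda^a(v)|^2/m$, which vanishes only if $\lambda^a(v)=0$, i.e.\ $v\in\E=\{0\}$. So the Hessian is negative definite, $\log f$ is strictly concave on $\Gamma_+$, and $\nabla\log f$ has invertible differential. The analytic inverse function theorem then makes it a local analytic diffeomorphism. Strict concavity also gives injectivity: if $\nabla\log f(a)=\nabla\log f(b)$ with $a\ne b$, then $\langle \nabla\log f(a)-\nabla\log f(b),a-b\rangle<0$ by strict monotonicity of the gradient of a strictly concave function, a contradiction.

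\emph{Step 3: surjectivity, the main obstacle.} Given $u\in\Gamma^*_+$, we maximize $\Phi(x)=\log f(x)-\langle u,x\rangle$ over $\Gamma_+$. A critical point gives $\nabla\log f(x)=u$. We must show that a maximizer exists, and we plan to do this via coercivity. Since $u\in\Gamma^*_+$ and $\Gamma$ is regular, there is $c>0$ with $\langle u,x\rangle\ge c|x|$ on $\Gamma$. On the other side, $\log f(x)\le \log|x| + C$ by homogeneity and boundedness of $f$ on the unit sphere. Hence $\Phi\to-\infty$ as $|x|\to\infty$ in $\Gamma_+$. Also $\Phi\to-\infty$ as $x\to\partial\Gamma\setminus\{0\}$ (because $p\to0$ there while $\langle u,x\rangle$ stays bounded) and as $x\to0$. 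Therefore superlevel sets $\{\Phi\ge \Phi(x_0)\}$ are compact subsets of $\Gamma_+$, and a maximizer exists.

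Combining the three steps, $\nabla\log f$ is an analytic bijection $\Gamma_+\to\Gamma^*_+$ with everywhere-invertible differential. Its inverse is therefore analytic, and the map is a real-analytic diffeomorphism.
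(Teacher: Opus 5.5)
Your argument is correct. Its second half matches the paper exactly: real-analyticity of $\nabla \log f$ because $p>0$ on $\Gamma_+$, nondegeneracy of the Hessian from \eqref{eq: Hessian of log hyperbolic} together with $\E=\{0\}$, and then the inverse function theorem.

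The difference is in bijectivity. The paper imports it from Renegar's Theorem~27, whereas you prove it directly:
\begin{enumerate}[label=(\roman*)]
\item The identity $\langle \nabla\log f(a),x\rangle=\frac1m\sum_j\lambda^a_j(x)$, together with regularity, gives strict positivity on the compact cross-section $\Gamma\cap\{|x|=1\}$. This places the image inside $\Gamma^*_+$.
\item Strict concavity of $\log f$ on the convex set $\Gamma_+$ gives injectivity.
\item Maximizing the coercive function $\log f(x)-\langle u,x\rangle$ gives surjectivity. This function tends to $-\infty$ at infinity, at $0$, and at $\partial\Gamma\setminus\{0\}$, so its superlevel sets are compact in $\Gamma_+$.
\end{enumerate}
This is the standard argument for logarithmically homogeneous barrier functions. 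Your version costs some length, but it makes the proposition self-contained. It also shows exactly where regularity enters: strict positivity of $u$ on $\Gamma\setminus\{0\}$, negative definiteness of the Hessian, and nonemptiness of $\Gamma^*_+$.

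One cosmetic point: delete the unfinished first formula in Step~1 involving $\lambda^x_j(a)$ and ``$\cdots$''. It only makes sense for $x\in\Gamma_+$, and you never use it.
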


\begin{proof}
Assume $\Gamma$ is regular. By \cite[Theorem 27]{Renegar_FoundCompMath2006_Hyperbolic_programs} and its proof, $\nabla \log f : \Gamma_+ \to \Gamma^*_+$ is a bijection. In addition, $\nabla \log f$ is real-analytic in $\Gamma_+$ since $p$ is a polynomial which is positive in $\Gamma_+$. Thus, by \eqref{eq: Hessian of log hyperbolic} and the inverse function theorem, $\nabla \log f : \Gamma_+ \to \Gamma^*_+$ is a real-analytic diffeomorphism.
\end{proof}

We recall the following elementary inequality which is of great importance for the theory of hyperbolic polynomials. A proof (taken from \cite[Lemma 5.3]{Harvey-Lawson_CPAM2013_Garding}) is provided here for the sake of exposition.

\begin{lemma} \label{lem: chap Garding theory, Garding AM-GM}
For any $a, x \in \Gamma_+$, we have
\[ \frac{f(x)}{f(a)} \leq \langle \nabla \log f(a), x \rangle, \]
with equality if and only if $\lambda^a_1(x) = \dots = \lambda^a_m(x)$.
\end{lemma}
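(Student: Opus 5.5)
The plan is to compute $\nabla \log f(a)$ explicitly in terms of the eigenvalues $\lambda^a(x)$ and then to apply the arithmetic--geometric mean inequality. First, I will differentiate $\log f = \frac{1}{m}\log p$ at $a$ in the direction $x$:
\[
\langle \nabla \log f(a), x\rangle = \frac{1}{m}\frac{d}{ds}\Big|_{s=0}\log p(a+sx).
\]
To evaluate this derivative I will use the factorization \eqref{eq: chap Garding theory, factor of homo poly} with the roles of $a$ and $x$ arranged appropriately. For $s\neq 0$, homogeneity gives $p(a+sx) = s^m p(s^{-1}a + x) = s^m p(a)\prod_j (s^{-1}+\lambda^a_j(x)) = p(a)\prod_j(1+s\lambda^a_j(x))$. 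Since both sides are polynomials in $s$, this identity also holds at $s=0$. Differentiating the logarithm at $s=0$ then yields
\[
\langle \nabla \log f(a), x\rangle = \frac{1}{m}\sum_{j=1}^m \lambda^a_j(x).
\]

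Next, I will express $f(x)/f(a)$ through the same eigenvalues. Setting $s=1$ in the factorization gives $p(a+x)$; the quantity I actually need comes from putting $t=0$ in $p(ta+x) = p(a)\prod_j(t+\lambda^a_j(x))$. This gives $p(x) = p(a)\prod_j \lambda^a_j(x)$, and hence
\[
\frac{f(x)}{f(a)} = \Big(\prod_{j=1}^m \lambda^a_j(x)\Big)^{1/m}.
\]
Since $x\in\Gamma_+ = \Gamma^a_+$ (Theorem \ref{thm: Garding}, using $a\in\Gamma_+$), all the $\lambda^a_j(x)$ are real and strictly positive.

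The inequality is then exactly AM--GM applied to the positive numbers $\lambda^a_1(x),\dots,\lambda^a_m(x)$, and equality in AM--GM holds precisely when all of these numbers coincide. The only point requiring care is justifying that the $\lambda^a$ defined with base point $a$ are the relevant eigenvalues. This is where Gårding's theorem enters: it ensures $p$ is $a$-hyperbolic with the same cone, so the $\lambda^a_j(x)$ are real and positive. I do not expect a serious obstacle beyond keeping the factorization conventions consistent.
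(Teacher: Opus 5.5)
Your proposal is correct and follows essentially the same route as the paper. Both arguments derive $p(a+sx)=p(a)\prod_j(1+s\lambda^a_j(x))$ from homogeneity (you via $p(a+sx)=s^m p(s^{-1}a+x)$, the paper via $\lambda^a(tx)=t\lambda^a(x)$), take the logarithmic derivative at $s=0$ to get $\langle\nabla\log f(a),x\rangle=\frac{1}{m}\sum_j\lambda^a_j(x)$, combine this with $p(x)=p(a)\prod_j\lambda^a_j(x)$, and conclude by AM--GM, with your explicit appeal to G\r{a}rding's theorem for $\lambda^a_j(x)>0$ filling in a step the paper leaves implicit.
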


\begin{proof}
Let $a \in \Gamma_+$. We recall \eqref{eq: chap Garding theory, factor of homo poly} that
\begin{equation} \label{eq: chap Garding theory, factor of homo poly again}
p(sa + x) = p(a)\prod_{j=1}^m (s + \lambda^a_j(x)) \quad \text{for every} \;\; s \in \Real \;\; \text{and} \;\; x \in V.
\end{equation}
In particular, taking $s = 0$ in \eqref{eq: chap Garding theory, factor of homo poly again}, we have
\begin{equation} \label{eq: p(x) = p(a) product eigenvalues}
p(x) = p(a) \prod_{j=1}^m \lambda^a_j(x) \quad \text{for every} \;\; x \in V. 
\end{equation}
Furthermore, since $p$ is $m$-homogeneous, one can see from \eqref{eq: chap Garding theory, factor of homo poly again} that
\[\lambda^a(tx) = t\lambda^a(x) \quad \text{for every} \;\; t \in \Real \;\; \text{and} \;\; x \in V.\]
Thus, taking $s = 1$ and replacing $x$ by $tx$ in \eqref{eq: chap Garding theory, factor of homo poly again}, we get
\begin{equation} \label{eq: factor p(a +tx)}
p(a + tx) = p(a) \prod_{j=1}^m (1 + t \lambda^a_j(x)) \quad \text{for all} \;\, t \in \Real \text{ and } x \in V.
\end{equation}
Then, taking the logarithmic derivative of \eqref{eq: factor p(a +tx)} at $t=0$ yields
\begin{equation} \label{eq: chap Garding theory, trace as gradient of log}	\langle \nabla \log p(a), x\rangle = \sum_{j=1}^{m} \lambda^a_j(x) \quad \text{for every} \;\; x \in V.
\end{equation}
If $x \in \Gamma_+$, then $\lambda^a_j(x) > 0$ for each $j = 1, \dots, m$. Therefore, since $f(x) = p(x)^{\frac{1}{m}}$, the assertion follows from \eqref{eq: p(x) = p(a) product eigenvalues}, \eqref{eq: chap Garding theory, trace as gradient of log} and the AM-GM inequality.
\end{proof}

The dual function $f^*$ of $f$ is defined by
\begin{equation} \label{eq: dual function}
f^*(u) \coloneqq \inf_{x \in \Gamma_+} \frac{\langle u, x \rangle}{f(x)} = \inf_{x \in \Gamma_+, \, f(x) = 1} \langle u,x \rangle, \quad \text{for} \;\; u \in \Gamma^*.
\end{equation}
In the context of hyperbolic polynomials, $f^*$ has several useful properties listed in the following.

\begin{theorem} \label{prop: dual function}
Assume that the G\r{a}rding cone $\Gamma_+$ is regular. The following properties of $f^*$ hold.
\begin{enumerate}[leftmargin=\parindent+0.5cm,label=\textnormal{(\alph*)}]
\item If $u \in \Gamma^*_+$, then $f^*(u) > 0$ and the infimum in \eqref{eq: dual function} is attained:
\[f^*(u) = \min_{x \in \Gamma_+, \, f(x) = 1} \langle u, x \rangle = \frac{1}{f(a)} = \frac{1}{f((\nabla \log f)^{-1}(u))},\]
where $a \in \Gamma_+$ satisfies $\nabla \log f(a) = u$, and the minimum is attained only at $x = \frac{a}{f(a)}$.
\item $f^*$ is real-analytic in $\Gamma^*_+$.
\item $f^*(\nabla f(a)) = 1$ for every $a \in \Gamma_+$.
\item If $u \in \partial \Gamma^* \backslash \{0\}$, then $f^*(u) = 0$, so the infimum in \eqref{eq: dual function} is not attained at any point in $\Gamma_+$.
\item $f^*$ is concave, continuous, and $1$-homogeneous on $\Gamma^*$.
\item The basic inequality is
\[\langle u, a \rangle \geq f^*(u) f(a) \quad \text{for all} \;\; u \in \Gamma^*_+ \;\; \text{and} \;\; a \in \Gamma_+,\]
with equality if and only if $\nabla f(a) = \frac{u}{f^* (u)}$ or, equivalently, $\nabla f^*(u) = \frac{a}{f(a)}$.
\item The gradients are related by
\[\nabla f^*(\nabla f(a)) = \frac{a}{f(a)}, \quad \nabla f(\nabla f^*(u)) = \frac{u}{f^*(u)} \quad \; \text{for all} \;\; u \in \Gamma^*_+ \;\; \text{and} \;\; a \in \Gamma_+.\]
In particular, $(f \nabla f) = (f^* \nabla f^*)^{-1}$ in $\Gamma_+$.
\item $(f^*)^* = f$.
\end{enumerate}
\end{theorem}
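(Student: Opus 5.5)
The plan is to build everything on Proposition \ref{prop: gradient of log regular hyperbolic poly is real analytic diffeo} and Lemma \ref{lem: chap Garding theory, Garding AM-GM}, and to go through the eight items in order.

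For (a), fix $u \in \Gamma^*_+$ and let $a = (\nabla \log f)^{-1}(u) \in \Gamma_+$. Lemma \ref{lem: chap Garding theory, Garding AM-GM} gives $\langle u, x\rangle \ge f(x)/f(a)$ for all $x \in \Gamma_+$. Equality holds exactly when $\lambda^a(x)$ is a constant vector $c\one$. Then $\lambda^a(x - ca) = 0$, so $x - ca$ lies in the edge, and regularity forces $x = ca$. On the slice $f(x)=1$ this gives the minimum $1/f(a) > 0$, attained only at $a/f(a)$. Item (b) follows immediately: $f^* = 1/(f\circ(\nabla\log f)^{-1})$ is a composition of real-analytic maps, with $f>0$. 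For (c), note $\nabla f = f\,\nabla \log f$. Hence $\nabla f(a) = f(a)u$ with $u = \nabla\log f(a)$. By the $1$-homogeneity of $f^*$ (clear from the definition) and (a), $f^*(\nabla f(a)) = f(a)\cdot 1/f(a) = 1$.

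For (e), $f^*$ is an infimum of linear functionals $u\mapsto\langle u,x\rangle/f(x)$. It is therefore concave and upper semicontinuous on $\Gamma^*$, and $1$-homogeneous. Continuity on the interior follows from concavity. At the boundary, $f^* \ge 0$ and the concave upper semicontinuous function vanishes there by (d), which gives continuity on all of $\Gamma^*$.

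Item (d) is where I expect the main work. Take $u \in \partial\Gamma^*\setminus\{0\}$. Then there is $x_0 \in \Gamma\setminus\{0\}$ with $\langle u,x_0\rangle = 0$, since the infimum of $u$ on the compact base of the regular cone is attained. Evaluate at $x = x_0 + \varepsilon e'$ with $e' \in \Gamma_+$. The bound $f(x)\ge f(x_0+\varepsilon e') \ge \varepsilon f(e')$ is too weak on its own: the quotient $\langle u,x\rangle/f(x)$ stays bounded. The fix is to choose $x_0 \in \partial\Gamma$ with $f(x_0)=0$, say $x_0$ on an extreme ray, and to instead use points $a_t \in \Gamma_+$ with $\nabla\log f(a_t)\to u/|u|$ direction-wise. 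Alternatively, use that the image $\nabla\log f(\Gamma_+)=\Gamma^*_+$ is the whole interior, and that $f^*(\nabla f(a))=1$ with $|\nabla f(a)|\to\infty$ as $a$ approaches a boundary point of the slice $f=1$. Homogeneity then gives $f^*\to0$ along rays towards $\partial\Gamma^*$, and combining this with upper semicontinuity yields $f^*(u)=0$. I would try the first route via an explicit sequence, since it avoids the boundary-behavior estimates of the second. Non-attainment is immediate, because any $x\in\Gamma_+$ gives $\langle u,x\rangle>0$ when $u\neq0$ lies in the dual of a cone with interior.

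For (f), the inequality is the definition of $f^*$. For equality, (a) shows it occurs iff $a/f(a)$ is the minimizer for $u$, i.e. iff $\nabla\log f(a)$ is parallel to $u$. Using (c) this is equivalent to $\nabla f(a)=u/f^*(u)$. The form $\nabla f^*(u)=a/f(a)$ comes from the envelope theorem applied to $f^*(u)=\langle u, x(u)\rangle$ with $x(u)$ the minimizer on the slice $f=1$, which gives $\nabla f^*(u) = x(u)$. Item (g) is (f) restated using the $0$-homogeneity of the gradients, and the composition identity follows from the two formulas. Finally, (h) follows from (f): $(f^*)^*(a) = \inf_u \langle u,a\rangle/f^*(u) \ge f(a)$, with equality at $u=\nabla f(a)$, extended to $\Gamma$ by continuity.
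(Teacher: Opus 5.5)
\textbf{The gap is in (d), and it is exactly the step where regularity must be used.} Your compactness step (existence of $x_0\in\Gamma\setminus\{0\}$ with $\langle u,x_0\rangle=0$) holds for any closed cone. But (d) is false without regularity: for $p(x)=x_1$ on $\Real^2$, the point $u=(1,0)$ lies in $\partial\Gamma^*\setminus\{0\}$, yet $f^*(u)=1$. There $x_0=(0,1)$ lies in the edge, and the quotient really does stay bounded, which is what you observed.

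You abandoned your first attempt too early. After rescaling it is the paper's argument, since
\[\frac{\langle u,x_0+\varepsilon e'\rangle}{f(x_0+\varepsilon e')}=\frac{\langle u,e'\rangle}{f(e'+x_0/\varepsilon)},\]
and the paper uses $a+tz$ with $t\to\infty$. What is missing is a sharper lower bound on $f$ than $\varepsilon f(e')$. Because $\Gamma$ is regular and $x_0\neq 0$, we have $x_0\notin\E$. So the eigenvalues $\lambda^{e'}_j(x_0)\ge 0$ are not all zero; say $r\ge 1$ of them are positive. Then
\[p(x_0+\varepsilon e')=p(e')\prod_{j=1}^m\bigl(\varepsilon+\lambda^{e'}_j(x_0)\bigr)\geq C\varepsilon^{m-r},\]
so the quotient is at most $C'\varepsilon^{r/m}\to 0$.

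The two replacement routes you sketch are not carried out: nothing shows that $f(a_t)$ or $|\nabla f(a)|$ blows up in a way tied to the given $u$. Moreover, the closing step runs in the wrong direction. Upper semicontinuity gives $f^*(u)\ge\limsup_k f^*(u_k)$, so $f^*(u_k)\to 0$ for $u_k\to u$ only yields $f^*(u)\ge 0$. To bound $f^*(u)$ from above by nearby values you would need concavity along a segment, $f^*((1-s)v+su)\ge(1-s)f^*(v)+sf^*(u)$, and you would still owe the blow-up estimate.

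The boundary continuity in (e) is deduced from (d), so it is only conditional. Granted (d), your argument for (e) is correct: $f^*$ is upper semicontinuous as an infimum of linear functions, $f^*\ge 0$, and $f^*=0$ on $\partial\Gamma^*$. It is also shorter than the paper's quantitative estimate with $t=\delta_k^{-\alpha}$.

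The same direction issue recurs in (h). A priori $(f^*)^*$ is only concave and upper semicontinuous, so ``extend by continuity'' does not give $(f^*)^*(x)\le 0$ at $x\in\partial\Gamma$. Use instead $\langle u,x\rangle\le\langle u,x+\varepsilon a\rangle$ for $u\in\Gamma^*_+$ and $a\in\Gamma_+$; this gives $(f^*)^*(x)\le f(x+\varepsilon a)\to 0$, as in the paper. Concavity along the segment from $x$ to $x+a$ would also work.

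Items (a), (b), (g) match the paper. Your proofs of (c), via $\nabla f=f\,\nabla\log f$ and homogeneity, and of (f), via uniqueness in (a) and the envelope theorem, are valid alternatives to the paper's concavity and tangency arguments.
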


\begin{proof}
We prove these statements one by one.

\textit{Proof of (a):} Let $u \in \Gamma^*_+$. Since $\Gamma$ is regular, by Proposition \ref{prop: gradient of log regular hyperbolic poly is real analytic diffeo}, there exists a unique point $a \in \Gamma_+$ such that $\nabla \log f(a) = u$. By Lemma \ref{lem: chap Garding theory, Garding AM-GM}, it follows that
\[\frac{\langle u, x \rangle}{f(x)} = \frac{\langle \nabla \log f(a), x \rangle}{f(x)} \geq \frac{1}{f(a)} \quad \text{for every} \;\; x \in \Gamma_+,\]
with equality if and only if $\lambda_1^a(x) = \dots = \lambda_m^a(x)$. Clearly, $x = a/f(a)$ satisfies the equality condition. In addition, suppose $y \in \Gamma_+$ satisfies $\lambda^a(y) = c\one$ for some $c > 0$. Then by the fact that
\[\lambda^a(ta + x) = t\one + \lambda^a(x) \quad \text{for all} \;\; t \in \Real \;\; \text{and} \;\; x \in V,\]
which can be obtained from \eqref{eq: chap Garding theory, factor of homo poly again}, we deduce that $\lambda^a(y - ca) = 0$. This means $y-ca \in \E$. Since $\Gamma$ is regular, we must have $y = ca$.\\

\textit{Proof of (b):} This follows from (a) and Proposition \ref{prop: gradient of log regular hyperbolic poly is real analytic diffeo}.\\

\textit{Proof of (c):} Note that since $f$ is concave and $1$-homogeneous, we have
\[\frac{\langle \nabla f(a), x \rangle}{f(x)} \geq 1 \quad \text{for every} \;\; x \in \Gamma_+,\]
where the equality holds at $x = a$.\\

\textit{Proof of (d):} Let $u \in \partial \Gamma^* \backslash \{0\}$. Then $\langle u, x \rangle \geq 0$ for all $x \in \Gamma$, so $f^*(u) \geq 0$. In addition, since $u \in \partial \Gamma^*$, by an interior cone condition (see e.g. \cite[Proposition I.1.4]{Faraut_book_Analysis_symmetric_cones}), there exists $z \in \Gamma\backslash\{0\}$ such that $\langle u, z \rangle = 0$. If $z \in \Gamma_+$, then there exists $\varepsilon > 0$ such that $z + \varepsilon u \in \Gamma_+$ and $z - \varepsilon u \in \Gamma_+$. It follows that
\[\varepsilon |u|^2 = \langle u, z + \varepsilon u \rangle \geq 0 \quad \text{and} \quad -\varepsilon |u|^2 = \langle u, z - \varepsilon u \rangle \geq 0.\]
This contradiction implies that $z \in \partial \Gamma$, so $p(z) = 0$. Let $a \in \Gamma_+$ and let $r$ be the rank of $z$, i.e.,
\[r = \#\{j \in [m] : \lambda_j^a(z) \neq 0\}.\]
Then $r \geq 1$ since $\Gamma$ is regular and $z \neq 0$. Thus by \eqref{eq: factor p(a +tx)}, we have
\[p(a + tz) = p(a)\prod_{j=1}^m (1 + t \lambda^a_j(z)) \geq Ct^r \quad \text{for all} \;\; t \geq 0,\]
for some constant $C > 0$. Since $a + tz \in \Gamma_+$ for all $t \geq 0$, it follows that
\[f^*(u) \leq \frac{\langle u, a + tz \rangle}{f(a + tz)} = \frac{\langle u,a \rangle}{f(a + tz)} \to 0 \quad \text{as} \;\; t \to \infty.\]
Hence $f^*(u) = 0$, and the infimum in \eqref{eq: dual function} is not attained at any point in $\Gamma_+$.\\

\textit{Proof of (e):} As the pointwise infimum of a family of linear functions, $f^*$ is $1$-homogeneous and concave in $\Gamma^*$. In addition, it is clear that $f^*$ is continuous at $0$. It remains to prove the continuity of $f^*$ at $\partial \Gamma^* \backslash \{0\}$.

Let $a \in \Gamma_+$ and $u \in \partial \Gamma^* \backslash \{0\}$. The argument in the proof of (d) shows that there exists $z \in \partial\Gamma \backslash \{0\}$ satisfying $\langle u, z \rangle = 0$. Let $r$ be the rank of $z$. Then since $p(z) = 0$ and $\Gamma$ is regular, we have $1 \leq r \leq m-1$. Moreover, by \eqref{eq: factor p(a +tx)}, we have
\[p(a + tz) = p(a)\prod_{j=1}^m (1 + t \lambda^a_j(z)) \geq Ct^r \quad \text{for all} \;\; t \geq 0,\]
for some constant $C > 0$. Hence,
\begin{equation} \label{eq: estimate f(a+tz) for z on boundary}
f(a+tz) = p(a+tz)^{\frac{1}{m}} \geq Ct^{\frac{r}{m}} \quad \text{for all} \;\; t \geq 0,
\end{equation}
for some constant $C > 0$.

Suppose that $\{u_k\}_{k \in \mathbb N}$ is a sequence in $\Gamma^*$ such that $u_k \to u$ as $k \to \infty$.  Since $\eval{f^*}_{\partial\Gamma^*} = 0$ by (d), we need to prove $f^*(u_k) \to 0$. It suffices to assume $u_k \in \Gamma^*_+$ for all $k \in \mathbb N$. Then setting $\delta_k = \langle u_k, z \rangle$, by an interior cone condition (see e.g. \cite[Proposition I.1.4]{Faraut_book_Analysis_symmetric_cones}), we have $\delta_k > 0$ and
\[\delta_k \to \langle u, z \rangle = 0 \quad \text{as} \;\; k \to \infty. \]
Furthermore, by \eqref{eq: estimate f(a+tz) for z on boundary}, for every $t \geq 0$, since $a + tz \in \Gamma_+$, we get
\begin{equation} \label{eq: decay of f(uk)}
f^*(u_k) \leq \frac{\langle u_k, a + tz \rangle}{f(a + tz)} = \frac{\langle u_k, a \rangle + t\delta_k}{f(a + tz)} \leq Ct^{-\frac{r}{m}} + C \delta_k t^{1-\frac{r}{m}},
\end{equation}
for some constant $C > 0$. Choosing any $0 < \alpha < \frac{m}{m-r}$ and then setting $t = \delta_k^{-\alpha}$ in \eqref{eq: decay of f(uk)}, we obtain
\[f^*(u_k) \leq C \delta_k^{\frac{\alpha r}{m}} + C \delta_k^{1 - \alpha(1 - \frac{r}{m})} \to 0 \quad \text{as} \;\; k \to \infty.\]
This completes the proof of (e). \\

\textit{Proof of (f):} The inequality of (f) simply follows from the definition \eqref{eq: dual function}. Moreover, if the equality holds, i.e., $\langle u, a \rangle = f^*(u) f(a)$ for some $u \in \Gamma^*_+$ and $a \in \Gamma_+$, then
\[f(a) = \left\langle \frac{u}{f^*(u)}, a \right\rangle \quad \text{while} \quad f(x) \leq \left\langle  \frac{u}{f^*(u)}, x \right\rangle \;\; \text{for all} \;\; x \in \Gamma_+.\]
Since $f$ is concave and $1$-homogeneous, this implies that $\nabla f(a) = u/f^*(u)$. By (b) and (e), this argument also shows that $\nabla f^*(u) = a/f(a).$\\

\textit{Proof of (g):} Note that for any $a \in \Gamma_+$ and $u \in \Gamma^*_+$, by (f) we have
\[\nabla f(a) = \frac{u}{f^*(u)} \quad \Longleftrightarrow \quad \nabla f^*(u) = \frac{a}{f(a)}.\]
We combine this and the fact that $\nabla f$ and $\nabla f^*$ are $0$-homogeneous to conclude (g).\\

\textit{Proof of (h):} Let $x \in \Gamma_+$. The basic inequality in (f) gives
\[f(x) \leq \frac{\langle u, x \rangle}{f^*(u)} \quad \text{for every} \;\; u \in \Gamma^*_+.\]
Taking the infimum over $u \in \Gamma^*_+$ of the right-hand side yields $f(x) \leq (f^*)^*(x)$. To see the reverse inequality, take $u = \nabla f(x)$. Then $u \in \Gamma^*_+$ by Proposition \ref{prop: gradient of log regular hyperbolic poly is real analytic diffeo} and
\[(f^*)^*(x) \leq \frac{\langle u, x \rangle}{f^*(u)} =  \frac{\langle \nabla f(x), x \rangle}{f^*(\nabla f(x))} = f(x),\]
where we have used (c) to obtain the last equality. Therefore $(f^*)^*(x) = f(x)$.

For $x \in \partial\Gamma$, we let $a \in \Gamma_+$. For any $\varepsilon > 0$, since $x + \varepsilon a \in \Gamma_+$, we have \[(f^*)^*(x + \varepsilon a) = f(x + \varepsilon a).\]
Since $\langle a, u \rangle \geq 0$ for all $u \in \Gamma^*_+$, this implies
\[0 \leq (f^*)^*(x) = \inf_{u \in \Gamma^*_+} \frac{\langle u, x \rangle}{f^*(u)} \leq \inf_{u \in \Gamma^*_+} \frac{\langle u, x + \varepsilon a \rangle}{f^*(u)} = (f^*)^*(x + \varepsilon a) = f(x + \varepsilon a).\]
Since $f(x) = 0$, we take the limit as $\varepsilon \to 0$ in the above inequalities to conclude that $ (f^*)^*(x) = 0 = f(x)$.

The proof is complete.
\end{proof}

\begin{example} \label{ex: dual of Jordan det}
Let $V$ be a finite-dimensional Euclidean Jordan algebra of rank $r$ with unit element $e$, symmetric cone $\P_+$, and the inner product
\[\langle x, y \rangle = \tr\nolimits_{\mathcal{J}}(x \circ y), \quad x, y \in V.\]
Example \ref{ex: Jordan det is e-central hyperbolic} shows that $\det\nolimits_{\mathcal{J}}$ is hyperbolic in $V$ with G\r{a}rding cone $\Gamma_+ = \P_+$. Moreover, under the identification $V^* = V$ via the inner product, the symmetric cone is self-dual, i.e., $\Gamma^*_+ = \Gamma_+ = \P_+$.
Define the function
\[f(x) = \left(\det\nolimits_{\mathcal{J}}x\right)^{\frac{1}{r}} \quad \text{for} \;\; x \in \P.\]
For $a \in \Gamma_+ = \P_+$, \cite[Propositions III.4.2(ii) and III.4.4]{Faraut_book_Analysis_symmetric_cones} give $\nabla \log \det\nolimits_{\mathcal{J}}(a) = a^{-1}$, so
\[\nabla \log f(a) = \frac{1}{r} \nabla \log \det\nolimits_{\mathcal{J}}(a)  = \frac{a^{-1}}{r}.\]
In other words, for every $u \in \Gamma^*_+ =\P_+$, setting $a = (ru)^{-1}$ gives $\nabla \log f(a) = u$. Thus, by Theorem \ref{prop: dual function}(a), we have
\[f^*(u) = \frac{1}{f(a)} = \frac{1}{f((ru)^{-1})} = \frac{1}{\det\nolimits_{\mathcal{J}}((ru)^{-1})^\frac{1}{r}} = \left(\det\nolimits_{\mathcal{J}}(ru)\right)^{\frac{1}{r}} = r \left(\det\nolimits_{\mathcal{J}}u\right)^{\frac{1}{r}}.\]
\end{example}

\begin{example} \label{ex: dual of sigma_2}
This example is taken from \cite[Eq. (1.12)]{Kuo-Trudinger_IUMJ2007_New_max_principle}. We consider the function $\sigma_2$ on $\Real^n$. The dual cone of $\Gamma(\sigma_2)$ is
\[\Gamma^* = \left\{u \in \Real^n : \, \sigma_1(u) \geq \sqrt{(n-1)}|u| \right\}.\]
Put $f(x) = \sigma_2(x)^{\frac{1}{2}}$ for $x \in \Gamma(\sigma_2)$. Then for every $u \in \Gamma^*$, we have
\[f^*(u) = \sqrt{2} \left(\frac{\sigma_1(u)^2}{n-1} - |u|^2\right)^{\frac{1}{2}}.\]
\end{example}

\begin{example} \label{ex: dual of sigma_2 matrices}
This example serves as an analogue of Example \ref{ex: dual of sigma_2} for symmetric matrices. We consider the function $\sigma_2$ on $\SS(n)$. The dual cone of $\Gamma(\sigma_2)$ is
\[\Gamma^* = \left\{A \in \SS(n) : \, \tr(A) \geq \sqrt{n-1}|A| \right\}.\]
Put $f(X) = \sigma_2(X)^{\frac{1}{2}}$ for $X \in \Gamma(\sigma_2)$. Then for every $A \in \Gamma^*$, we have
\[f^*(A) = \sqrt{2} \left(\frac{\tr(A)^2}{n-1} - |A|^2\right)^{\frac{1}{2}}.\]
\end{example}

\begin{remark} \label{rmk: majorization and dual bound for gradient}
Let $p$ and $q$ be hyperbolic polynomials in $V$ of degree $m_p$ and $m_q$, respectively, and suppose that $\Gamma(p)_+ \supset \Gamma(q)_+ \supset \{e\}$. We define
\[f(x) = p(x)^{\frac{1}{m_p}} \;\; \text{for } x \in \Gamma(p) \quad \text{and} \quad g(y) = q(y)^{\frac{1}{m_q}} \;\; \text{for } y \in \Gamma(q).\]
Since $f$ is $1$-homogeneous and concave in $\Gamma(p)_+$, we have
\[f(a) \leq \langle \nabla f(x), a \rangle \quad \text{for all} \;\; a,x \in \Gamma(p)_+,\]
where the equality holds at $x = a$. Therefore, the following equivalences hold
\begin{align*}
\frac{f(a)}{f(e)} \geq \frac{g(a)}{g(e)} \;\; \forall\, a \in \Gamma(q)_+ & \;\; \Longleftrightarrow \;\; \frac{\langle \nabla f(x), a \rangle}{g(a)} \geq \frac{f(e)}{g(e)} \;\; \forall\, x \in \Gamma(p)_+,\, \forall\, a \in \Gamma(q)_+\\
& \;\; \Longleftrightarrow \;\; \inf_{a \in \Gamma(q)_+}  \frac{\langle \nabla f(x), a \rangle}{g(a)} \geq \frac{f(e)}{g(e)} \;\; \forall\, x \in \Gamma(p)_+\\
& \;\; \Longleftrightarrow \;\; g^*(\nabla f(x)) \geq \frac{f(e)}{g(e)} \;\; \forall\, x \in \Gamma(p)_+.
\end{align*}
\end{remark}

Remark \ref{rmk: majorization and dual bound for gradient} and Example \ref{ex: dual of Jordan det} show that Theorem \ref{thm: majorization in Jordan algebra} equivalently gives the following generalization of \cite[Proposition 1.6]{Harvey-Lawson_Duke2025_det_major}.

\begin{corollary}
Let $V$ be a finite-dimensional Euclidean Jordan algebra of rank $r$ with unit element $e$ and symmetric cone $\P_+$. Suppose that $p$ is an $e$-central hyperbolic polynomial of degree $m$ on $V$ with G\r{a}rding cone $\Gamma_+$. We set $f(x) = p(x)^{\frac{1}{m}}$ for $x \in \Gamma_+$. If $\Gamma_+ \supset \P_+$, then
\[\det\nolimits_{\mathcal{J}}(\nabla f(x)) \geq \left(\frac{f(e)}{r}\right)^r \quad \text{for all} \;\; x \in \Gamma_+.\]
\end{corollary}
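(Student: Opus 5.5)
The plan is to combine Theorem \ref{thm: majorization in Jordan algebra} with the duality equivalence of Remark \ref{rmk: majorization and dual bound for gradient}, taking $q = \det\nolimits_{\mathcal{J}}$ and using the explicit dual function from Example \ref{ex: dual of Jordan det}.

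First I set up the data for Remark \ref{rmk: majorization and dual bound for gradient}. Take $V$ with the trace inner product $\langle x, y\rangle = \tr\nolimits_{\mathcal{J}}(x\circ y)$. This identifies $V^*$ with $V$, so that $\nabla f(x)\in V$. Put $g(y) = (\det\nolimits_{\mathcal{J}} y)^{1/r}$ on $\P$. By Example \ref{ex: Jordan det is e-central hyperbolic}, $q=\det\nolimits_{\mathcal{J}}$ is hyperbolic with $\Gamma(q)_+ = \P_+ \ni e$. By hypothesis $\Gamma(p)_+ = \Gamma_+ \supset \P_+$, so the standing assumption $\Gamma(p)_+\supset\Gamma(q)_+\ni e$ of the Remark holds.

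Next, Theorem \ref{thm: majorization in Jordan algebra} gives $f(a)\ge f(e)\,g(a)$ for all $a\in\P_+$. Since $g(e)=1$, this is exactly the left side of the chain of equivalences, $f(a)/f(e)\ge g(a)/g(e)$. The Remark then yields
\[g^*(\nabla f(x)) \ge f(e) \quad \text{for all } x\in\Gamma_+ .\]

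To conclude I need $g^*$ in closed form at the point $u=\nabla f(x)$. Example \ref{ex: dual of Jordan det} gives $g^*(u) = r(\det\nolimits_{\mathcal{J}}u)^{1/r}$ for $u\in\P_+$. This formula uses only the Jordan determinant: Theorem \ref{prop: dual function} is stated for regular cones, and $\P_+$ is regular, which is the setting where the Example applies. So I must check that $\nabla f(x)\in\P_+ = \Gamma(q)^*_+$ for every $x\in\Gamma_+$, and this is the main point. Concavity and $1$-homogeneity of $f$ give $\langle\nabla f(x), a\rangle \ge f(a) > 0$ for every $a\in\Gamma_+\supset\P_+$. Hence $\nabla f(x)$ pairs strictly positively with every nonzero element of $\P$: a nonzero $a\in\partial\P$ satisfies $a+\varepsilon e\in\P_+$, and we pass to the limit, using that $f$ is positive on nonzero points of $\P$ near $\P_+$ (alternatively, note $\langle \nabla f(x), a\rangle >0$ for all $a$ in the open cone suffices to place $\nabla f(x)$ in $\P^*$). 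Self-duality of $\P$ then gives $\nabla f(x)\in\P$. If it lay on the boundary, we would have $g^*(\nabla f(x))=0$ by Theorem \ref{prop: dual function}(d), or directly from the definition as an infimum, contradicting $g^*(\nabla f(x))\ge f(e)>0$. So $\nabla f(x)\in\P_+$ automatically.

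Combining, $r(\det\nolimits_{\mathcal{J}}\nabla f(x))^{1/r}\ge f(e)$, which is the claim after raising both sides to the $r$-th power. Note that no regularity of $\Gamma_+$ is needed, since the dual function is only taken for $g$. Should the implication in the Remark need the infimum defining $g^*$ to be taken over $\P_+$ rather than $\Gamma$, this causes no trouble because the two infima coincide by continuity.
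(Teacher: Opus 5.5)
Your proposal is correct and follows the paper's own route: Theorem~\ref{thm: majorization in Jordan algebra}, fed into the equivalence of Remark~\ref{rmk: majorization and dual bound for gradient} with $q=\det\nolimits_{\mathcal{J}}$, and then the closed form for $g^*$ from Example~\ref{ex: dual of Jordan det}. Your extra check that $\nabla f(x)\in\P_+$, using $\nabla f(x)\in\P^*=\P$ and $g^*=0$ on $\partial\P\setminus\{0\}$, fills in a detail the paper leaves implicit; your first parenthetical justification is not right as stated, since $f$ can vanish on $\partial\P\setminus\{0\}$ (e.g.\ when $p=\det\nolimits_{\mathcal{J}}$), but your alternative argument via continuity and self-duality is all you need.
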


Similarly, by Remark \ref{rmk: majorization and dual bound for gradient}, Example \ref{ex: dual of sigma_2}, and Corollary \ref{cor: sigma_2 majorization}, we have the following.

\begin{corollary}
Let $p$ be a $\one$-central hyperbolic polynomial of degree $m$ on $\Real^n$ with G\r{a}rding cone $\Gamma(p)_+$. We set $f(x) = p(x)^{\frac{1}{m}}$ for $x \in \Gamma(p)_+$. If $\Gamma(p)_+ \supset \Gamma(\sigma_2)_+$, then
\[\sigma_1(\nabla f(x))^2 - (n-1)|\nabla f(x)|^2 \geq \frac{f(\one)^2}{n} \quad \text{for all} \;\; x \in \Gamma(p)_+.\]
\end{corollary}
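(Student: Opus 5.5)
By Remark \ref{rmk: majorization and dual bound for gradient}, applied with $g = \sigma_2^{1/2}$ on $\Real^n$, with $q = \sigma_2$ and $e = \one$, the claimed gradient inequality is equivalent to Corollary \ref{cor: sigma_2 majorization}. The plan is therefore to translate that corollary into the dual language and evaluate $g^*$ using Example \ref{ex: dual of sigma_2}.

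First, the hypotheses of the Remark hold. Since $\Gamma(p)_+ \supset \Gamma(\sigma_2)_+ \ni \one$, we have $\Gamma(p)_+ \supset \Gamma(q)_+ \supset \{\one\}$. Corollary \ref{cor: sigma_2 majorization} then gives
\[\frac{f(a)}{f(\one)} \geq \frac{\sigma_2(a)^{1/2}}{\binom{n}{2}^{1/2}} = \frac{g(a)}{g(\one)}\]
for all $a \in \Gamma(\sigma_2)_+$, because $g(\one) = \sigma_2(\one)^{1/2} = \binom{n}{2}^{1/2}$. By the chain of equivalences in the Remark, this yields
\[g^*(\nabla f(x)) \geq \frac{f(\one)}{\binom{n}{2}^{1/2}} \quad \text{for all } x \in \Gamma(p)_+.\]

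Second, we check that Example \ref{ex: dual of sigma_2} applies to $u = \nabla f(x)$, i.e.\ that $\nabla f(x) \in \Gamma(\sigma_2)^*$. Concavity and $1$-homogeneity of $f$ give $\langle \nabla f(x), a\rangle \geq f(a) \geq 0$ for all $a \in \Gamma(p) \supset \Gamma(\sigma_2)$. Hence $\nabla f(x) \in \Gamma(p)^* \subset \Gamma(\sigma_2)^*$. In this step we do not need regularity of $\Gamma(p)$, only the definition of $g^*$ as an infimum, together with the explicit formula of Example \ref{ex: dual of sigma_2}. That formula concerns $\Gamma(\sigma_2)$, which is regular for $n\ge 2$, so it holds on all of $\Gamma(\sigma_2)^*$.

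Third, we substitute $g^*(u) = \sqrt{2}\,\bigl(\sigma_1(u)^2/(n-1) - |u|^2\bigr)^{1/2}$ and square:
\[2\left(\frac{\sigma_1(\nabla f)^2}{n-1} - |\nabla f|^2\right) \geq \frac{2 f(\one)^2}{n(n-1)}.\]
Multiplying by $(n-1)/2$ gives $\sigma_1(\nabla f(x))^2 - (n-1)|\nabla f(x)|^2 \geq f(\one)^2/n$, as claimed. Squaring is legitimate because both sides are nonnegative.

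The only delicate point is justifying that the infimum formula for $g^*$ matches Example \ref{ex: dual of sigma_2} at $u=\nabla f(x)$. If $\nabla f(x)$ lies on $\partial\Gamma(\sigma_2)^*$, the formula gives $0$, but the established lower bound $g^* \geq f(\one)/\binom{n}{2}^{1/2} > 0$ excludes this, so the computation is consistent. Everything else is bookkeeping of the constants $\binom{n}{2}$.
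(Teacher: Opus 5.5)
Your proof is correct and follows the paper's route exactly. You apply Corollary~\ref{cor: sigma_2 majorization}, then the forward implication of Remark~\ref{rmk: majorization and dual bound for gradient} with $g=\sigma_2^{1/2}$ and $g(\one)=\binom{n}{2}^{1/2}$, then the explicit formula for $g^*$ from Example~\ref{ex: dual of sigma_2}; your constants check out, and your verification that $\nabla f(x)\in\Gamma(p)^*\subset\Gamma(\sigma_2)^*$ (so that formula applies) is a detail the paper leaves implicit.
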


In the symmetric matrix setting, Remark \ref{rmk: majorization and dual bound for gradient}, Example \ref{ex: dual of sigma_2 matrices}, and Corollary \ref{cor: sigma_2 majorization in matrices} give the following analogue.

\begin{corollary}
Let $P$ be an $I_n$-central hyperbolic polynomial of degree $m$ on $\SS(n)$ with G\r{a}rding cone $\Gamma(P)_+$. We set $F(X) = P(X)^{\frac{1}{m}}$ for $X \in \Gamma(P)_+$. If $\Gamma(P)_+ \supset \Gamma(\sigma_2)_+$, then
\[\tr(\nabla F(X))^2 - (n-1)|\nabla F(X)|^2 \geq \frac{F(I_n)^2}{n} \quad \text{for all} \;\; X \in \Gamma(P)_+.\]
\end{corollary}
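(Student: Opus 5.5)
We deduce the statement from Corollary \ref{cor: sigma_2 majorization in matrices} by means of the duality equivalence in Remark \ref{rmk: majorization and dual bound for gradient}. In that remark we take $V=\SS(n)$, $e=I_n$, $p=P$ with $m_p=m$, and $q=\sigma_2$ with $m_q=2$. The remark's hypothesis $\Gamma(P)_+\supset\Gamma(\sigma_2)_+\ni I_n$ is exactly our cone containment assumption. Set $g(Y)=\sigma_2(Y)^{1/2}$, so that $g(I_n)=\binom{n}{2}^{1/2}$.

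Corollary \ref{cor: sigma_2 majorization in matrices} then reads $F(A)/F(I_n)\ge g(A)/g(I_n)$ for all $A\in\Gamma(\sigma_2)_+$. By the chain of equivalences in Remark \ref{rmk: majorization and dual bound for gradient}, this gives
\[g^*(\nabla F(X))\ge \frac{F(I_n)}{g(I_n)}=\frac{F(I_n)}{\binom{n}{2}^{1/2}}\quad\text{for all } X\in\Gamma(P)_+ .\]
To use Example \ref{ex: dual of sigma_2 matrices} we need $\nabla F(X)\in\Gamma(\sigma_2)^*$, so that $g^*$ is evaluated on the set where the example's formula is valid. This holds because $F$ is concave and $1$-homogeneous on $\Gamma(P)_+$: for every $A\in\Gamma(P)_+$ we have $\langle\nabla F(X),A\rangle\ge F(A)>0$, hence $\nabla F(X)\in\Gamma(P)^*\subset\Gamma(\sigma_2)^*$, the inclusion following from the cone containment. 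The example then gives
\[g^*(\nabla F(X))=\sqrt2\left(\frac{\tr(\nabla F(X))^2}{n-1}-|\nabla F(X)|^2\right)^{1/2}.\]
Squaring both sides of the inequality yields
\[2\left(\frac{\tr(\nabla F)^2}{n-1}-|\nabla F|^2\right)\ge\frac{2F(I_n)^2}{n(n-1)}.\]
Multiplying by $(n-1)/2$ gives $\tr(\nabla F(X))^2-(n-1)|\nabla F(X)|^2\ge F(I_n)^2/n$, which is the claim.

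The main obstacle is justifying the formula for $g^*$ in Example \ref{ex: dual of sigma_2 matrices}, since the paper only cites it. To check it, we use the Lorentz identification from the proof of Corollary \ref{cor: sigma_2 majorization in matrices}: write $X=tI_n+Y$ with $\tr Y=0$, and set $U=Y/\sqrt{n(n-1)}$, so that $g(X)^2=\binom n2(t^2-|U|^2)$. Similarly write $A=sI_n+B$ with $\tr B=0$. Then
\[\langle A,X\rangle=nst+\langle B,Y\rangle=nst+\sqrt{n(n-1)}\,\langle B,U\rangle .\]
Minimizing this over the hyperboloid $t^2-|U|^2=1$ is the Lorentz dual computation (the rank-two case of Example \ref{ex: dual of Jordan det}). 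The minimum is $\left(n^2s^2-n(n-1)|B|^2\right)^{1/2}$, and dividing by $\binom n2^{1/2}$ gives $g^*(A)$.

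Substituting $\tr A=ns$ and $|A|^2=ns^2+|B|^2$ recovers $\sqrt2\,\bigl(\tr(A)^2/(n-1)-|A|^2\bigr)^{1/2}$. The same computation shows that the dual cone is $\{\tr A\ge\sqrt{n-1}\,|A|\}$. Since $\Gamma(\sigma_2)$ is regular, because its edge $\{t=0,\ U=0\}$ is $\{0\}$, the infimum formulation used in the remark applies.
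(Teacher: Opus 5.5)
Your proposal is correct and is exactly the paper's argument: apply the forward direction of Remark \ref{rmk: majorization and dual bound for gradient} to Corollary \ref{cor: sigma_2 majorization in matrices} with $g=\sigma_2^{1/2}$ and $g(I_n)=\binom{n}{2}^{1/2}$, insert the formula of Example \ref{ex: dual of sigma_2 matrices}, and rescale. Your additional checks correctly supply details the paper leaves implicit: that $\nabla F(X)\in\Gamma(P)^*\subset\Gamma(\sigma_2)^*$, and the Lorentz-model derivation of $g^*$ and of the dual cone. The appeal to regularity is harmless but not needed, since the remark only uses the infimum definition of $g^*$.
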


\section*{Acknowledgements}	
The author was supported by National Key R\&D Program of China 2020YFA0712800.

\end{document}